\documentclass[10pt]{amsart}
\addtolength\headheight{4pt}
\usepackage{amssymb}
\usepackage{color}
\usepackage{graphicx}
\usepackage{float}
\usepackage[all,cmtip]{xy}
\setlength\parindent{0pt}
\setlength{\oddsidemargin}{5pt} \setlength{\evensidemargin}{5pt}
\setlength{\textwidth}{440pt}
\setlength{\topmargin}{-50pt}
\setlength{\textheight}{24cm}
\setlength{\parskip}{3mm plus0.4mm minus0.4mm}

\newcommand{\SF}{{\mathcal{F}}}
\newcommand{\SL}{{\mathcal{L}}}
\newcommand{\im}{{\operatorname{Image}}}
\newcommand{\Id}{{\operatorname{Id}}}
\newcommand{\wind}{{\operatorname{wind}}}
\newcommand{\ind}{{\operatorname{ind}}}
\newcommand{\dist}{{\operatorname{dist}}}
\newcommand{\area}{{\operatorname{area}}}
\newcommand{\ev}{{\operatorname{ev}}}

\newcommand{\End}{{\operatorname{End}}}
\newcommand{\R}{{\mathbb{R}}}

\newtheorem{lemma}{Lemma}
\newtheorem{proposition}[lemma]{Proposition}
\newtheorem{theorem}[lemma]{Theorem}

\newtheorem{definition}[lemma]{Definition}

\theoremstyle{remark}
\newtheorem{remark}[lemma]{Remark}
\newtheorem{example}[lemma]{Example}

\begin{document} 

\title{The Foliated Weinstein Conjecture}

\subjclass[2010]{Primary: 53D10.}
\date{May, 2015}

\keywords{contact structures, foliations, Weinstein conjecture}

\author{\'Alvaro del Pino}
\address{Universidad Aut\'onoma de Madrid and Instituto de Ciencias Matem\'aticas -- CSIC.
C. Nicol\'as Cabrera, 13--15, 28049, Madrid, Spain.}
\email{alvaro.delpino@icmat.es}

\author{Francisco Presas}
\address{Instituto de Ciencias Matem\'aticas -- CSIC.
C. Nicol\'as Cabrera, 13--15, 28049, Madrid, Spain.}
\email{fpresas@icmat.es}

\begin{abstract}
A foliation is said to admit a foliated contact structure if there is a codimension $1$ distribution in the tangent space of the foliation such that the restriction to any leaf is contact. We prove a version of the Weinstein conjecture in the presence of an overtwisted leaf. The result is shown to be sharp.
\end{abstract}

\maketitle

\section{Introduction}

The \textbf{Weinstein conjecture} \cite{Wei} states that the Reeb vector field associated to a contact form $\alpha$ in a closed $(2n+1)$--manifold $M$ always carries a closed periodic orbit. Hofer proved in \cite{Ho} that the Weinstein conjecture holds for any 3-dimensional contact manifold $(M^3, \alpha)$ overtwisted or satisfying $\pi_2(M) \neq 0$. Then, it was proven in full generality by Taubes \cite{Tau} by localising the Seiberg-Witten equations along Reeb orbits.

The main theorem of this note -- definitions of the relevant objects will be given in the next section -- reads as follows:

\begin{theorem} \label{thm:main}
Let $(M^{3+m}, \SF^3, \xi^2)$ be a contact foliation in a closed manifold $M$. Let $\alpha$ be a defining 1--form for an extension of $\xi$ and let $R$ be its Reeb vector field. Let $\SL^3 \hookrightarrow M$ be a leaf.
\begin{itemize}
\item[i.] If $(\SL, \xi|_{\SL})$ is an overtwisted contact manifold, $R$ possesses a closed orbit in the closure of $\SL$.
\item[ii.] If $\pi_2(\SL) \neq 0$, $R$ possesses a closed orbit in the closure of $\SL$.
\end{itemize}
\end{theorem}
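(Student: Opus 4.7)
The plan is to argue by contradiction, adapting Hofer's pseudo-holomorphic curve proof~\cite{Ho} to the leafwise setting. Suppose $R$ has no closed orbit in $\overline{\SL}$. Because $R$ is tangent to $\SF$, I would first choose a \emph{foliated} almost complex structure $J$ on the symplectization $\R\times M$: set $J(\partial_t)=R$ and put a leafwise $d\alpha$-compatible complex structure on $\xi\subset T\SF$. A direct check shows that $J$ preserves $T(\R\times\SF)$, so any $J$-holomorphic curve with an image point in $\R\times\SL'$ stays entirely within $\R\times\SL'$ for the leaf $\SL'$ through that point.

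For case~(i), the overtwisted disk $D\subset\SL$ with unique elliptic point $p$ supplies, by the classical construction of Hofer--Wysocki--Zehnder, a Bishop family of $J$-holomorphic half-disks in $\R\times\SL$ whose boundaries wrap concentric circles in $D$. I would perform the standard analytic continuation and invoke Hofer's trichotomy: the family either closes up (ruled out by the Fredholm index at~$p$), bubbles off a $J$-holomorphic sphere, or its energy diverges and a suitable rescaling yields a finite-energy plane whose positive asymptote is a closed orbit of~$R$. For case~(ii), I would instead exploit a non-trivial element of $\pi_2(\SL)$, via area minimization in the appropriate class, to produce a $J$-holomorphic sphere in $\R\times\SL$ and run the analogous deformation argument, obtaining the same three alternatives.

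In both cases the sought closed Reeb orbit appears as a limit of pseudo-holomorphic curves whose images sit in $\R\times\SL$. By compactness of~$M$ that limit lies in $\R\times\overline{\SL}$, and by leafwise tangency of~$R$ it is contained entirely in some leaf $\SL'\subset\overline{\SL}$, contradicting our standing assumption. Any sphere bubbling that may occur along the way again lives inside a single leaf (by the foliated-ness of $J$); it either exhibits $\pi_2\ne 0$ of that leaf, thereby feeding case~(ii) on it, or is ruled out by the positivity of symplectic area when the class is homotopically trivial.

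The main obstacle I foresee is the non-compactness of individual leaves: Hofer's original analysis relies on Gromov-style compactness in a closed target, whereas here the target $\R\times\SL$ is open and a priori curves or their bubbles might accumulate on other leaves of $\overline{\SL}\setminus\SL$. The essential analytical content is therefore a \emph{foliated compactness theorem}, asserting that sequences of $J$-holomorphic curves with uniformly bounded Hofer energy and images in $\R\times\SL$ subconverge to holomorphic buildings whose components lie in the symplectizations of leaves contained in $\overline{\SL}$. Once this is established, the leafwise tangency of~$R$ together with the compactness of $\overline{\SL}\subset M$ promotes Hofer's trichotomy to a genuine closed orbit of~$R$ in~$\overline{\SL}$.
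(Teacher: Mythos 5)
Your plan for case~(i) matches the paper's strategy: a foliated, $\mathbb{R}$--invariant $J$ on the symplectization with $J(\partial_t)=R$ forces every $J$--holomorphic curve into a single $\R\times\SL'$; the Bishop family emanates from the elliptic point of the overtwisted disc; a gradient blow-up forces a bubble, and the resulting finite--energy plane is asymptotic to a closed Reeb orbit, which by compactness of the ambient $M$ lives in some leaf contained in $\overline{\SL}$. You also correctly identify that the crux is adapting the compactness/bubbling analysis to the open leaves, which the paper resolves exactly by exploiting compactness of the ambient $M$ in the Arzel\`a--Ascoli step, so that limits and bubbles may land in other leaves of $\overline{\SL}$.

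However, your plan for case~(ii) has a genuine gap. You propose to ``produce a $J$--holomorphic sphere in $\R\times\SL$ via area minimization in a nontrivial $\pi_2$ class.'' For the $\mathbb{R}$--invariant $J$ used here this is impossible: $d(\phi\alpha)$ is exact for every $\phi\in\Gamma$, so by Stokes a closed $J$--holomorphic curve has zero energy and hence is constant (this is Lemma~\ref{lem:noSphereBubbling} in the paper, which is used precisely to \emph{exclude} sphere bubbling). The correct route, followed both by Hofer and by the paper, is to take an embedded sphere $\Sigma\subset\SL$ realizing a nontrivial class in $\pi_2(\SL)$, perturb it by the Eliashberg--Giroux--Fuchs theorem so its characteristic foliation has exactly two nicely elliptic points $p_\pm$, and run \emph{two} Bishop families $\mathcal{M}^\pm$ of disks with boundary on $\{0\}\times\Sigma$. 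If neither family bubbles, they match up and the evaluation map exhibits $\Sigma$ as the image of the boundary of $[0,1]\times\mathbb{D}^2$, contradicting $[\Sigma]\neq 0$ in $\pi_2(\SL)$; hence a plane bubble must appear. Note also that in both cases the \emph{energy} of the Bishop disks is uniformly bounded above (Stokes against the area enclosed in $\Sigma$); what blows up is the gradient, and your phrase ``its energy diverges'' should be corrected. Finally, the family cannot ``close up'' for a reason other than Fredholm index: if it were compact it would reach the legendrian boundary of the overtwisted disc (or the second elliptic point), and transversality of the disc boundaries to the characteristic foliation (the maximum principle argument of Lemma~\ref{lem:winding}) rules this out.
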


The case where the leaf $\SL$ is closed corresponds to the Weinstein conjecture. This result constrasts, just as in the non--foliated case, with the behaviour of \textsl{smooth flows}: it was proven in \cite{CPP15} that any never vanishing vector field tangent to a foliation $(M^{3+m}, \SF^3)$ can be homotoped, using parametric plugs, to a tangent vector field without periodic orbits. 

The proof of Theorem \ref{thm:main}, based on Hofer's methods, occupies the last section of the note. Before that, several examples showing the sharpness of the result are discussed. 

In Subsection \ref{ssec:tight}, Proposition \ref{prop:noGeodesics} constructs a contact foliation in the 4--torus $\mathbb{T}^4$ that has \textsl{all leaves tight} and that has no Reeb orbits. Naturally, in this example all leaves are open. This shows that the \textbf{foliated Weinstein conjecture} does not necessarily hold as soon as we drop the assumption on overtwistedness. Then Proposition \ref{prop:geodesic} presents a more sophisticated example of a contact foliation in $\mathbb{S}^3 \times \mathbb{S}^1$ with all leaves tight and with closed Reeb orbits appearing only in the unique compact leaf of the foliation. 

In Subsection \ref{ssec:sharp} we construct a foliation in $\mathbb{S}^2 \times \mathbb{S}^1 \times \mathbb{S}^1$ that has two compact leaves $\mathbb{S}^2 \times \mathbb{S}^1 \times \{0,\pi\}$ on which all others accumulate. We then endow it with a foliated contact structure that makes all leaves overtwisted but that has closed Reeb orbits only in the compact ones. Theorem \ref{thm:main} is therefore sharp in the sense that an overtwisted leaf might not possess a Reeb orbit \textsl{itself}.

In Subsection \ref{ssec:nonComplete} we construct Reeb flows with no closed orbits in every \textsl{open contact manifold}. 

\textsl{Acknowledgements.} Part of this work was carried out while the first author was visiting the group of Andr\'as Stipsicz at the Alfr\'ed R\'enyi Institute. We are very thankful to Klaus Niederkr\"uger for the many hours of insightful conversations about this project. The authors would also like to thank Viktor Ginzburg for his interest in this work and for his suggestions regarding Propositions \ref{prop:openLeaf} and \ref{prop:noGeodesics}. Several other people have discussed with us when this work was in progress: S. Behrens, R. Casals, M. Hutchings, E. Miranda, D. Pancholi, and D. Peralta--Salas.

The first author is supported by a La Caixa--Severo Ochoa grant. Both authors are supported by Spanish National Research Project MTM2013--42135 and the ICMAT Severo Ochoa grant SEV--2011--0087 through the V. Ginzburg Lab.

\section{The relevant concepts involved}

All objects considered henceforth will be smooth. Foliations and distributions will be oriented and cooriented. Often, arguments where orientability assumptions are dropped would go through by taking double or quadruple covers appropriately. 

\subsection{Contact structures}

\begin{definition}
Let $W$ be a $2n+1$ dimensional manifold. A distribution $\xi^{2n} \subset TW$ is said to be a \textbf{contact distribution} if it is maximally non--integrable. A $1$--form $\alpha \in \Omega^1(W)$ satisfying $\ker(\alpha) = \xi$ is called a \textbf{contact form}. $\xi$ being maximally non--integrable amounts to $\alpha$ satisfying $\alpha \wedge d\alpha^n \neq 0$. 

We say that the pair $(W,\xi)$ is a \textbf{contact manifold}.
\end{definition}

A map $\phi: (W_1, \xi_1) \to (W_2,\xi_2)$ satisfying $\phi^* \xi_2 = \xi_1$ is a contact map. If $\phi$ is additionally a diffeomorphism we will say that $\phi$ is a \textbf{contactomorphism}.

\begin{example}
Consider $\mathbb{R}^{2n+1}$ with coordinates $(x_1,y_1,\cdots,x_n,y_n,z)$. The $2n$--distribution $\xi_{st} = \ker(dz-\sum_{i=1..n} x_idy_i)$ is called the standard \textbf{tight} contact structure.
\end{example}

\begin{example}
Consider $\mathbb{R}^3$ with cylindrical coordinates $(r,\theta,z)$. The $2$--distribution $\xi_{ot} = \ker(\cos(\theta)dz+r\sin(r)d\theta)$ is called the contact structure \textbf{overtwisted at infinity}. The disc $\Delta = \{z=0, r \leq \pi\}$ is called the \textbf{overtwisted disc}.
\end{example}

It was shown by Bennequin in \cite{Be} that the structures $(\mathbb{R}^3, \xi_{st})$ and $(\mathbb{R}^3, \xi_{ot})$, although homotopic as plane fields, are distinct as contact structures. 

\subsubsection{Overtwisted contact structures in dimension $3$}

\begin{definition}
Let $(W^3,\xi^2)$ be a contact manifold. $(W,\xi)$ is said to be an \textbf{overtwisted} contact manifold if there is an embedded 2--disc $D \subset W$ and a contactomorphism $\phi: \nu(\Delta) \to \nu(D)$ between a neighbourhood $\nu(\Delta)$ of the overtwisted disc $\Delta \subset \mathbb{R}^3$ and a neighbourhood $\nu(D) \subset W$ of $D$. 
\end{definition}

The relevance of this notion stems from the following theorem stating that overtwisted contact manifolds are completely classified by their underlying algebraic topology.

\begin{theorem} \emph{(Eliashberg \cite{El89})} \label{thm:El89}
Let $W^3$ be a $3$--fold. Any plane field $\eta \subset TW$ is homotopic to an overtwisted contact structure. 

Further, any two overtwisted contact structures $\xi_1, \xi_2 \subset TW$ homotopic as plane fields are homotopic through overtwisted contact structures. In particular, they are contactomorphic.
\end{theorem}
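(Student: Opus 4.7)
The plan is to prove the existence and uniqueness statements separately, using Martinet's theorem together with Lutz twists for the former, and a cellular $h$-principle argument for the latter.

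For existence, I would begin with Martinet's theorem, which provides an auxiliary contact structure $\xi_0$ on $W$. Given a target plane field $\eta$, the homotopy class of $\eta$ relative to $\xi_0$ is measured by an invariant taking values in $H^2(W;\mathbb{Z})$ (the difference of Euler classes) together with a $3$-dimensional obstruction living in $H^3(W;\mathbb{Z})$ on the subset where the Euler classes agree. Both invariants can be altered, in any prescribed way, by performing half and full Lutz twists along suitably chosen transverse knots (or transverse links) realising appropriate homology classes. Since each Lutz twist inserts a standard neighbourhood of an overtwisted disc, the resulting contact structure $\xi$ is automatically overtwisted and homotopic to $\eta$ as a plane field.

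For uniqueness, let $\xi_0,\xi_1$ be overtwisted contact structures joined by a homotopy of plane fields $\eta_t$. Fix overtwisted discs $D_i$ for $\xi_i$ with neighbourhoods contactomorphic to $\nu(\Delta)$, and arrange $\eta_t$ to be constant and contact on these neighbourhoods. The goal is to deform $\eta_t$, rel endpoints and rel these neighbourhoods, into a path of overtwisted contact structures $\xi_t$. I would take a very fine triangulation of $W$ and extend a homotopy of contact structures inductively over the $0$-, $1$-, $2$- and $3$-skeleta. The crucial inductive step is the extension over each $3$-cell $\sigma$: given a contact structure prescribed on a neighbourhood of $\partial\sigma$ and a path of plane fields on $\sigma$ extending it, produce a path of contact structures on $\sigma$ with the correct boundary behaviour. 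The overtwisted disc, reachable from $\sigma$ through a chain of cells, is used as the source of flexibility.

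The main obstacle is precisely this cellular extension, which is the technical heart of Eliashberg's theorem. One would first make $\partial\sigma$ convex (in the sense of Giroux), bring its characteristic foliation and dividing set into a normal form, and then use the overtwisted disc to cancel any obstructions encountered when trying to extend: a ``bad'' dividing configuration on $\partial\sigma$ can be brought, via local isotopies and Lutz-type modifications supported in the chain of cells connecting $\sigma$ to an OT disc, into a standard form on which an explicit contact extension is available. The parametric version needed for the homotopy $\eta_t$ is analogous but requires choosing the triangulation transverse to the one-parameter family, so that at each time only finitely many cells need adjusting simultaneously. Once the cell-by-cell extension has been carried out, the resulting homotopy $\xi_t$ of overtwisted contact structures yields, via Gray stability, a contactomorphism between $\xi_1$ and $\xi_2$.
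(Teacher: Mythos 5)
The paper does not contain a proof of this theorem: it is a celebrated result of Eliashberg, cited from \cite{El89} and used as a black box. There is therefore no paper-internal argument against which to check you; what follows compares your sketch to the argument in the literature.

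Your plan is a reasonable high-level summary of the Lutz--Martinet existence argument combined with an Eliashberg-style uniqueness argument. The existence half is essentially correct and complete as a sketch: Martinet provides one contact structure, and (half and full) Lutz twists along transverse links change the two-- and three--dimensional obstructions of the plane field in a controlled way while inserting overtwisted discs, so every homotopy class of plane field is realised by an overtwisted contact structure. Two caveats: you should be aware that the full Lutz twist does \emph{not} change the homotopy class of the plane field (that is part of what it is for), so the precise accounting of which twists change $d_2$ and which change $d_3$ needs more care than ``half and full Lutz twists alter both invariants''; and the two- and three-dimensional invariants interact (the $d_3$-type invariant lives in a quotient that depends on the Euler class), so ``in any prescribed way'' glosses over some bookkeeping.

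The uniqueness half is where the genuine mathematics lives, and your sketch stops exactly there. The sentence ``a bad dividing configuration on $\partial\sigma$ can be brought, via local isotopies and Lutz-type modifications supported in the chain of cells connecting $\sigma$ to an OT disc, into a standard form on which an explicit contact extension is available'' names the conclusion of Eliashberg's key lemma rather than giving an argument for it. In Eliashberg's proof the crucial step is the extension of a (parametric family of) contact structure(s) from the boundary $S^2$ of a $3$--cell across the ball, where the boundary characteristic foliation has been put in a standard ``almost horizontal'' form and the interior carries an overtwisted disc providing the room to manoeuvre; establishing that step requires an extensive analysis of characteristic foliations of spheres and explicit local models, and is the technical core of \cite{El89}. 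Appealing to Giroux convexity is a legitimate modern rephrasing but is anachronistic relative to 1989 and still hides the same work. You also need to justify that an overtwisted disc is actually available in the relevant region at the relevant stage of the induction -- this is not automatic from ``reachable through a chain of cells'' and requires an explicit arrangement at the start (e.g.\ inserting extra overtwisted discs and keeping track of them, or normalising so that every $3$--cell sees one). In short: correct skeleton, but the load-bearing lemma is asserted, not proved.
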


Theorem \ref{thm:El89} says that 2--plane fields and contact structures in $3$--manifolds present a 1 to 1 correspondence at the level of connected components. Eliashberg's result is stronger than what we have stated. Indeed, there is a weak homotopy equivalence if one restricts to the class of plane fields that have a fixed overtwisted disc. 

Overtwisted contact structures in $\mathbb{R}^3$ were completely classified by Eliashberg in \cite{El93}. In particular, the following proposition will be used in Subsection \ref{ssec:sharp}.

\begin{proposition} \emph{(Eliashberg \cite{El93})} \label{prop:El93}
Let $\xi$ be a contact structure in $\mathbb{R}^3$ that is overtwisted in the complement of every compact subset. Then $\xi$ is isotopic to $\xi_{ot}$. 
\end{proposition}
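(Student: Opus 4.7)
The plan is to apply Eliashberg's classification of overtwisted contact structures on closed $3$-manifolds (Theorem \ref{thm:El89}) via a proper exhaustion of $\mathbb{R}^3$. A first observation is that $(\mathbb{R}^3,\xi_{ot})$ itself satisfies the hypothesis of the proposition: the $z$-translates of the standard overtwisted disc $\Delta$ are overtwisted discs escaping every compact set, since $\xi_{ot}$ is invariant under translations in $z$. So we are comparing two overtwisted-at-infinity contact structures on $\mathbb{R}^3$, and the goal is to produce a contactomorphism between them; the isotopy statement then follows by Gray's theorem applied along a smooth path between them in the space of overtwisted-at-infinity contact structures.

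I would build nested, relatively compact open sets $U_1\subset U_2\subset\cdots$ exhausting $(\mathbb{R}^3,\xi_{ot})$ and $V_1\subset V_2\subset\cdots$ exhausting $(\mathbb{R}^3,\xi)$, together with contactomorphisms $\phi_n\colon(U_n,\xi_{ot})\to(V_n,\xi)$ satisfying $\phi_{n+1}|_{U_n}=\phi_n$. The base case $\phi_1$ comes from Darboux's theorem applied to a pair of small balls. The union $\phi=\bigcup_n\phi_n$ is then the required contactomorphism, provided the exhaustions are chosen so that every point of $\mathbb{R}^3$ eventually lies in some $U_n$ and in the image of some $\phi_n^{-1}$; this can be arranged by insisting for example that $V_n\supset B_n(0)$ and $U_n\supset \phi_n^{-1}(B_n(0))$ at each stage.

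For the inductive step, I arrange $\partial U_n$ and $\partial V_n$ to be convex $2$-spheres (after a small perturbation, since convexity is $C^\infty$-generic in dimension three) whose characteristic foliations are matched by $\phi_n$. Using the hypothesis, I enlarge so that the shells $W=\overline{U_{n+1}}\setminus U_n$ and $W'=\overline{V_{n+1}}\setminus V_n$ each contain an overtwisted disc, and so that their outer boundaries are again convex with characteristic foliations to be identified. Capping both spheres of each shell by standard contact $3$-balls produces overtwisted contact structures on two copies of $S^3$, whose underlying plane fields are homotopic (the obstruction lies in $H^2(S^3;\pi_2(S^2))$, which vanishes, after a spin correction that is absorbed by varying the disc). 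Theorem \ref{thm:El89} then supplies a contactomorphism of the capped manifolds, and contact isotopy extension modifies it to fix the capping balls, producing the desired extension of $\phi_n$ across the shells.

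The main obstacle is the relative nature of the inductive step: the h-principle in Theorem \ref{thm:El89} is not stated rel boundary, so the contactomorphism of capped $S^3$'s must be isotoped to coincide with the prescribed identification on both capping balls. This uses the contact isotopy extension theorem together with the uniqueness, up to contact isotopy, of Darboux balls and of neighborhoods of a convex surface determined by its characteristic foliation. Arranging this uniformly at every stage, while keeping the exhaustions compatible, is the technical heart of the argument and is precisely what requires Eliashberg's refined work in \cite{El93} beyond the bare statement of Theorem \ref{thm:El89}.
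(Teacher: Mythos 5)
The paper does not give a proof of this proposition; it is quoted as a black box from Eliashberg's classification paper \cite{El93}, so there is no internal argument to compare against. Your sketch is a reasonable reconstruction of the general shape of such an argument (exhaust both $(\mathbb{R}^3,\xi)$ and $(\mathbb{R}^3,\xi_{ot})$ by compact pieces with convex boundary, match the pieces using the overtwisted $h$--principle, glue), but as written it has concrete gaps beyond the one you flag at the end.

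First, your obstruction-theoretic claim is off. Oriented plane fields on $S^3$ are classified by $[S^3,S^2]=\pi_3(S^2)\cong\mathbb{Z}$, and in obstruction theory the relevant obstruction is the secondary one, living in $H^3(S^3;\pi_3(S^2))\cong\mathbb{Z}$; the group $H^2(S^3;\pi_2(S^2))$ you invoke is the primary obstruction, and its vanishing says nothing about the integer ($d_3$/Hopf) invariant. ``A spin correction absorbed by varying the disc'' does not address this: you need to argue that the capping balls can be chosen so the two capped structures on $S^3$ have the same Hopf invariant, and this requires an actual bookkeeping of how capping shifts $d_3$. Second, the opening appeal to Gray's theorem does not work as stated: on the open manifold $\mathbb{R}^3$ the Moser vector field need not be complete, so a path of contact structures does not automatically integrate to an ambient isotopy. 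This step is in fact unnecessary: once you have an orientation-preserving contactomorphism $\phi$ with $\phi^*\xi=\xi_{ot}$, the contractibility of $\mathrm{Diff}^+(\mathbb{R}^3)$ (onto $SO(3)$, by the radial retraction) gives a diffeotopy from $\mathrm{id}$ to $\phi$ and hence the desired isotopy directly. Finally, you correctly identify the central difficulty --- Theorem~\ref{thm:El89} is absolute, while your induction needs a version relative to a prescribed boundary identification --- but the inductive step as you describe it (apply the absolute classification to a capped $S^3$, then ``fix it'' with isotopy extension) does not close this gap: isotopy extension lets you move one cap into place, but it does not simultaneously control both caps, which is what the induction needs. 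Filling this in is exactly where the specific content of \cite{El93} lives, and the proposal does not supply it, so the argument remains a roadmap rather than a proof.
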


Contact structures with the property that they remain overtwisted after removing any compact subset are called \textbf{overtwisted at infinity}.

\subsubsection{Overtwisted contact structures in higher dimensions}

Overtwisted contact structures have been defined in full generality -- for every dimension -- in \cite{BEM}. In \cite{CMP} it has been shown that the overtwisted disc in higher dimensions can be understood as an stabilisation of the overtwisted disc in dimension $3$.

The following lemma will be useful in Subsection \ref{ssec:nonComplete}. Its proof is based on a \textsl{swindling} argument, as found in \cite{El92}.

\begin{lemma} \label{coro:BEM} \emph{(\cite[Corollary 1.4]{BEM})}
Let $(M^{2n+1}, \xi_M)$ be a connected overtwisted contact manifold and let $(N^{2n+1}, \xi_N)$ be an open contact manifold of the same dimension. Let $f : N \rightarrow M$ be a smooth embedding covered by a contact bundle homomorphism $\Phi : TN \rightarrow TM$ -- that is, $\Phi|_{\xi_M(p)}$ maps into $\xi_N(f(p))$ and preserves the conformal symplectic structure -- and assume that $df$ and $\Phi$ are homotopic as injective bundle homomorphisms $TN \rightarrow TM$.

Then $f$ is isotopic to a contact embedding $\tilde{f} : (N, \xi_N) \rightarrow (M, \xi_M)$.
\end{lemma}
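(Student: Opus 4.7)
The plan is to follow Eliashberg's swindle, as implemented in \cite{BEM}, combining two complementary flexibility phenomena. The first is the flexibility of \emph{open} contact manifolds: since $N$ is open it admits an exhausting handle decomposition with handles of index at most $n$, and consequently there is an isotopy of contact self-embeddings that compresses $N$ into an arbitrarily thin neighbourhood of a subcomplex of positive codimension (its ``core''). In particular any relatively compact subset of $N$ can be squeezed, via an ambient contact isotopy, into a region of arbitrarily small contact volume. The second ingredient is the absorbing property of the overtwisted disc $D \subset M$: in any prescribed neighbourhood $U$ of $D$ and for any radius $R > 0$ one can produce a contact embedding of the standard Darboux ball $(B^{2n+1}_R, \xi_{st})$ into $U$. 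This ``wrapping'' of a large ball into a small neighbourhood of $D$ is precisely the feature that overtwistedness enables and that would fail in a tight target.

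Assuming both ingredients, the argument proceeds as follows. Exhaust $N$ by relatively compact open sets $K_1 \subset K_2 \subset \cdots$ with $\bigcup_i K_i = N$. For each $K_i$, one first applies the compression of the first step inside $N$ to push $K_i$ into a region of very small volume. Then, using the Darboux charts produced by the second step near $D$, one finds a large standard Darboux ball in $M$ containing a diffeomorphic copy of that small region. Inside such a Darboux chart, the formal data $\Phi$ restricts to a formal symplectic trivialisation of $TK_i$; by the classical h-principle for contact embeddings into $(\R^{2n+1}, \xi_{st})$, it can be rectified to a genuine contact embedding $\tilde{f}_i : K_i \to M$ homotopic to $f|_{K_i}$ through the prescribed formal homotopy from $df$ to $\Phi$.

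The main obstacle is to assemble the partial embeddings $\tilde{f}_i$ into a single contact embedding $\tilde{f} : N \to M$ while respecting the formal homotopy globally. This is where the swindle proper enters: the compressions in $N$ and the Darboux inclusions near $D$ are performed at successively smaller scales, so that $\tilde{f}_i$ agrees with $\tilde{f}_{i-1}$ outside an ever-shrinking tail and the resulting sequence converges uniformly. The convergence of the corresponding formal homotopies to the one given by $\Phi$ is enforced by the contractibility of the space of formal contact embeddings of each compact Darboux piece, which absorbs any discrepancy produced at the previous stage. Verifying that this telescoping converges to a smooth contact embedding isotopic to $f$ through the prescribed formal homotopy is the technical heart of the argument.
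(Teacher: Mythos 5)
The paper gives no proof of this lemma; it is quoted from \cite[Corollary 1.4]{BEM}, with only the remark that the argument there rests on a swindle in the spirit of \cite{El92}. So the only question is whether your sketch holds up, and the central step does not. You compress a relatively compact piece $K_i\subset N$, place the compressed region inside a Darboux ball of $M$ near the overtwisted disc $D$, and then invoke ``the classical $h$-principle for contact embeddings into $(\R^{2n+1},\xi_{st})$'' to upgrade the formal data $\Phi|_{K_i}$ to a genuine contact embedding. No such $h$-principle exists: contact embeddings into a tight target are rigid. Take $(N,\xi_N)=(\R^3,\xi_{ot})$ and let $K_i$ be a relatively compact ball containing the overtwisted disc $\Delta$. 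The plane field $\xi_{ot}|_{K_i}$ is homotopically trivial, so a formal contact embedding of $(K_i,\xi_{ot})$ into any Darboux ball certainly exists, but there is no genuine one, because a Darboux ball is tight and tightness is inherited by open subsets. Placing the Darboux ball near $D$ does not help; a Darboux chart is tight no matter where it sits in $M$. Overtwistedness of the target is the single hypothesis the lemma cannot do without, and your scheme discards it at exactly the moment it is needed. Relatedly, your ``second ingredient'' does not actually exploit overtwistedness: all Darboux balls of all radii are contactomorphic (the contact form rescales), so ``producing a large Darboux ball near $D$'' is true for any point of any contact manifold and carries no flexibility.

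Two secondary issues are worth flagging. An open $(2n+1)$-manifold need not admit a handle decomposition with handles of index at most $n$; openness only rules out handles of top index, and that is what Gromov's open $h$-principle actually uses. And the claim that one can compress $N$ into a thin neighbourhood of a core by an isotopy of \emph{contact} self-embeddings is not a formal consequence of openness -- for Weinstein manifolds the Liouville flow does it, but for a general open contact manifold this is itself nontrivial. More fundamentally, what the overtwistedness and the swindle are there to overcome is the failure of Gray stability on the open manifold $N$: Gromov's open $h$-principle gives a homotopy of contact structures from $\xi_N$ to $f^*\xi_M$ on $N$, but because $N$ is open this homotopy need not integrate to an isotopy. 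A correct argument has to keep the overtwisted disc of $(M,\xi_M)$ in play while resolving this non-compactness problem, rather than retreating into a tight Darboux chart.
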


\subsubsection{Convex surfaces} \label{sssec:convex}

Let $(W^3, \xi^2)$ be a contact manifold. Let $\Sigma^2 \subset W$ be an immersed surface. The intersection $\xi \cap T\Sigma$ yields a singular foliation by lines on $\Sigma$, which is called the \textbf{characteristic foliation}. In the generic case, it can be assumed that the singularities -- the points where $\xi_p = T_p\Sigma$ -- are isolated points, that can then be classified into \textbf{nicely elliptic} and \textbf{hyperbolic}.

\begin{example} \label{ex:ot}
By our characterisation of overtwistedness, any overtwisted manifold $(W, \xi)$ contains a disc $\Sigma$ with a single singular point, which is nicely elliptic and whose boundary is legendrian. All other leaves spiral around the legendrian boundary in one end and converge to the elliptic point in the other. Such a disk appears as a $C^\infty$--small perturbation of the overtwisted disk $\Delta$.
\end{example}

\begin{example} \label{ex:tight}
Consider the unit sphere $\mathbb{S}^2$ in $(\mathbb{R}^3, \xi_{st})$. Its singular foliation has two critical points located in the poles, which are nicely elliptic. All other leaves are diffeomorphic to $\mathbb{R}$ and they connect the poles. 
\end{example}

\begin{theorem}[Eliashberg, Giroux, Fuchs] \label{thm:EGF}
Let $\Sigma = \mathbb{S}^2$ and let $(W, \xi)$ be tight. Then, after a $C^0$--small perturbation of its embedding, it can be assumed that the characteristic foliation of $\Sigma$ is conjugate to the one of the unit sphere in $\mathbb{R}^3$ tight. 
\end{theorem}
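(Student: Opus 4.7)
The plan is to bring the characteristic foliation $\Sigma_\xi$ of $\Sigma=\mathbb{S}^2$ into the model form of Example \ref{ex:tight} by a sequence of $C^0$--small perturbations of the embedding, using tightness crucially at two stages: to control the topology of the dividing set, and to exclude closed orbits of the characteristic foliation.

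First, after a generic perturbation of $\Sigma$ one may assume that the singularities of $\Sigma_\xi$ are isolated and each is either nicely elliptic (of index $+1$) or hyperbolic (of index $-1$), with no saddle connections between hyperbolic points. Writing $e,h$ for the respective counts, the Poincar\'e--Hopf formula on $\mathbb{S}^2$ gives $e-h=2$. A further $C^\infty$--small perturbation can be arranged so that $\Sigma$ is convex in Giroux's sense, yielding a dividing multicurve $\Gamma \subset \Sigma$ transverse to $\Sigma_\xi$ that splits $\Sigma$ into a positive region $\Sigma_+$ and a negative region $\Sigma_-$. By the Giroux criterion, tightness in a neighbourhood of a convex sphere forces $\Gamma$ to be connected: otherwise one of the components of $\Sigma \setminus \Gamma$ contains a subdisc whose boundary realises a Legendrian unknot with vanishing Thurston--Bennequin invariant, producing an overtwisted disc in $W$. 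Hence $\Gamma$ is a single embedded circle and $\Sigma_\pm$ are both topological discs.

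Next I would invoke the Giroux elimination lemma: a pair of singularities $p,q$ of the same sign, one elliptic and one hyperbolic, joined by a single leaf of $\Sigma_\xi$, can be cancelled by a $C^0$--small isotopy of the embedding supported in an arbitrarily small neighbourhood of that leaf. The obstruction to iterating this procedure is the presence of a leaf with non--singular $\omega$--limit set, i.e.\ a closed orbit of the characteristic flow. But a closed orbit of $\Sigma_\xi$ is a Legendrian circle in $(W,\xi)$, and on $\mathbb{S}^2$ it bounds a disc inside $\Sigma_+$ or $\Sigma_-$ whose characteristic foliation forces the Thurston--Bennequin invariant of the boundary to vanish, again yielding an overtwisted disc. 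Tightness thus excludes closed orbits, so every flow line runs from singularity to singularity, and one may iteratively cancel same--sign elliptic--hyperbolic pairs in each of $\Sigma_+$ and $\Sigma_-$ until each hemisphere contains exactly one elliptic singularity and no hyperbolic ones.

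At this point the characteristic foliation has two elliptic poles and non--singular leaves, each of which crosses $\Gamma$ transversely once and runs from one pole to the other; this is exactly the combinatorial type of Example \ref{ex:tight}. A conjugating diffeomorphism of $\mathbb{S}^2$ is then produced in a standard way: use the local normal form near nicely elliptic points at the two poles, a collar of $\Gamma$ to identify the equatorial bands, and flow--box coordinates along the non--singular leaves to extend. The main obstacle in this strategy is the elimination step: verifying that no topological obstruction prevents the inductive pairing inside $\Sigma_\pm$ and that tightness genuinely excludes every form of recurrent behaviour of the characteristic flow. The rest of the argument is essentially bookkeeping with the Poincar\'e--Hopf identity and a conjugacy assembled from local models.
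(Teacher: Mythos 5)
The paper states Theorem~\ref{thm:EGF} as a known result attributed to Eliashberg, Giroux, and Fuchs and provides no proof of its own, so there is nothing internal to compare your attempt against; what follows is an assessment of your sketch on its own terms.

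Your outline is essentially the standard strategy and the skeleton is sound: normalize the singularities, invoke the Poincar\'e--Hopf count $e-h=2$, make $\Sigma$ convex, use the Giroux criterion to force the dividing set $\Gamma$ to be a single circle, kill same--sign elliptic/hyperbolic pairs with the elimination lemma, and then build a conjugacy from local models. Two points deserve more care. First, your treatment of closed leaves is a bit off in the details: once $\Sigma$ is convex with directing vector field $X$ whose divergence has a fixed sign on each of $\Sigma_{\pm}$, Dulac's criterion already rules out a closed orbit lying entirely inside $\Sigma_+$ or $\Sigma_-$, so a putative closed leaf necessarily crosses $\Gamma$ and does not ``bound a disc inside $\Sigma_+$ or $\Sigma_-$'' as you write. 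The correct contradiction is the following: along any closed leaf $\gamma$ the planes $\xi$ and $T\Sigma$ agree only in the tangent line $T\gamma$, so the angle between them in the normal circle bundle never vanishes and therefore cannot wind; hence the contact framing equals the surface framing, $\mathrm{tb}(\gamma)=0$ relative to either disc $\gamma$ bounds in $\mathbb{S}^2$, and the Bennequin inequality $\mathrm{tb}+|\mathrm{rot}|\le -1$ for a Legendrian unknot in a tight manifold is violated. That is the argument your ``forces the Thurston--Bennequin invariant to vanish'' is gesturing at, but as stated it is not a proof. Second, your route is somewhat redundant: once $\Sigma$ is convex and $\Gamma$ is known to be a single circle, Giroux's realization lemma lets you isotope $\Sigma$ (through convex surfaces, hence $C^\infty$--small and a fortiori $C^0$--small) so that $\Sigma_\xi$ is any prescribed foliation divided by $\Gamma$, in particular the model of Example~\ref{ex:tight}. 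This bypasses the elimination lemma, the closed--orbit analysis, and the explicit construction of the conjugating diffeomorphism at the end. Conversely, Eliashberg's original proof avoids convex surface theory entirely and works directly with elimination plus a delicate analysis of limit cycles. Either pure approach is cleaner than the hybrid; as written, the elimination step is doing work that the realization lemma you already have available would do for free.
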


\subsection{Contact foliations}

The contents of this section appear in more detail in \cite{CPP}.

\begin{definition}
A \textbf{contact foliation} is a triple $(M^{2n+1+m}, \SF^{2n+1}, \xi^{2n})$ where $M$ is a manifold of dimension $2n+1+m$, $\SF$ is a foliation of codimension $m$, and $\xi \subset T\SF$ is a distribution of dimension $2n$ that is contact on each leaf of $\SF$. 

Often we will say that $\xi$ is a \textbf{foliated contact structure} on the foliation $(M, \SF)$. 
\end{definition}

Contact foliations do exist in abundance as the following result shows:
\begin{theorem} $($ \cite{CPP} $)$
Let $(M^{3+m}, \SF^3)$ be a foliation such that the structure group of $T\SF$ reduces to $U(1)\oplus 1$. Then $\SF$ admits a foliated contact structure with all leaves overtwisted.
\end{theorem}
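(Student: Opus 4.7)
The plan is to use the $U(1)\oplus 1$ reduction to set up a foliated almost contact structure on $M$, then deform it leafwise---smoothly in the transverse direction---to an overtwisted contact structure on every leaf. First, observe that the reduction provides a splitting of foliated bundles $T\SF = \eta \oplus \langle X \rangle$, where $\eta$ is a rank-$2$ subbundle carrying a leafwise complex structure $J$ and $X$ is a nowhere-vanishing leafwise tangent vector field. Restricted to each leaf $L$, the data $(\eta|_L, J|_L)$ is an almost contact structure, i.e.\ an oriented plane field with a compatible complex structure.

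Next I would introduce overtwisted discs on every leaf by a transversally parametric model construction. Cover $M$ by a locally finite collection of foliated charts $U_i \cong T_i \times D^3$, with $T_i$ an $m$-dimensional transversal, arranged so that each plaque $\{t\}\times D^3$ lies in a single leaf and so that $\bigsqcup_i T_i$ meets every leaf. In each plaque, install a copy of the standard overtwisted model $\xi_{ot}$ of Example above, depending smoothly on $t\in T_i$; after a preliminary homotopy of $(\eta,J)$ supported in a collar of each plaque, this yields a foliated contact structure on an open set $V\subset M$ whose intersection with each leaf contains a genuine overtwisted disc.

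Finally, I would extend this foliated contact structure from $V$ to all of $M$ via a parametric-in-transversal, relative form of Eliashberg's h-principle (Theorem \ref{thm:El89}). Leafwise, that theorem produces a homotopy through plane fields---rel the overtwisted neighborhood---from the starting almost contact structure to an overtwisted contact structure agreeing with the prescribed model on the neighborhood. The hard part will be arranging that these leafwise homotopies can be selected to vary smoothly over each transversal $T_i$ and glued across the cover $\{U_i\}$ using a subordinate partition of unity: Eliashberg's proof involves a sequence of discrete choices (handle decompositions of the leaf, placements of overtwisted discs, and extensions of contact germs across handles) that a priori do not assemble into smooth transverse families. The reason the construction nevertheless succeeds is that once the overtwisted germ is fixed on $V$, the relevant space of overtwisted extensions is weakly contractible, and this flexibility persists parametrically, allowing the leafwise homotopies to be smoothed into a genuine foliated distribution $\xi\subset T\SF$ whose leafwise restriction is overtwisted contact.
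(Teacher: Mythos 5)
You have correctly identified the skeleton of the argument: the $U(1)\oplus 1$ reduction of the structure group of $T\SF$ is equivalent to choosing a foliated almost contact structure $(\eta, J)$; one then introduces overtwisted disc germs on every leaf and invokes Eliashberg-type flexibility to homotope $(\eta, J)$, relative to those germs, to a genuine foliated contact structure. The theorem is cited from \cite{CPP} and not proved in the present paper, so I can only judge the viability of your route.

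The third step---which you flag yourself as ``the hard part''---is exactly where all the content lies, and the proposal does not carry it out. Two concrete issues. First, you propose to glue local homotopies over a cover $\{U_i \cong T_i\times D^3\}$ with a subordinate partition of unity, but neither the contact condition nor overtwistedness is convex, so convex combinations of homotopies on overlaps need not terminate at a contact structure, let alone an overtwisted one. Second, and more fundamentally, the foliated problem is not a parametric problem over a single transversal: a codimension-$m$ foliation typically has dense leaves, nontrivial holonomy, and a non-Hausdorff leaf space, so there is no global parameter space over which the weak homotopy equivalence underlying Theorem \ref{thm:El89} can be applied directly; and weak contractibility of the fibers is a statement about extending over cells of a CW-complex, not automatically a statement about producing a smooth section of a foliated bundle of function spaces. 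To make an argument like yours rigorous one replaces the chart-by-chart construction with an induction over the skeleta of a triangulation of $M$ placed in general position with respect to $\SF$ (\`a la Thurston jiggling), extending the leafwise contact condition cell by cell and using the overtwisted germs to pass the top-dimensional cells; that is the kind of foliated $h$-principle machinery that \cite{CPP} builds. As written, ``allowing the leafwise homotopies to be smoothed into a genuine foliated distribution'' is an assertion rather than an argument, so the proof is incomplete at its only nontrivial point.
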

This result is the foliated counterpart of Eliashberg's result \cite{El89}. 

We say that a distribution $\Theta^{2n+m}$ satisfying $\xi = \Theta \cap T\SF$ is an \textbf{extension} of $\xi$, and a regular equation $\alpha$ can be considered for $\Theta = \ker(\alpha)$. It follows that $d\alpha$ is a symplectic form on $\xi$, but not necessarily on $\Theta$. 

\begin{definition}
Let $(M,\SF,\xi)$ be a contact foliation. Let $\Theta$ be an extension of $\xi$ with regular equation $\alpha$. The \textbf{Reeb vector field} $R$ associated to $\alpha$ is the unique vector field satisfying $R \in \Gamma(T\SF)$, $(i_R d\alpha)|_{T\SF} = 0$, and $\alpha(R) = 1$. 
\end{definition}

Of course this is nothing but the leafwise Reeb vector field induced by the restriction of $\alpha$ to each leaf of $\SF$.

\subsubsection{The space of foliated contact elements}

The following concept will be relevant in the subsequent construction.
\begin{definition}
A \textbf{strong symplectic foliation} is a triple $(M^{m+2n},\SF^{2n}, \omega)$ where $M$ is a smooth manifold, $\SF$ a foliation, and $\omega \in \Omega^2(M)$ a closed 2--form that is symplectic on the leaves of $\SF$.
\end{definition}

Let $(M^{n+m}, \SF^n)$ be a smooth foliation. The cotangent space to the foliation $\pi: T^*\SF \to M$ is an $n$--dimensional bundle over $M$ that carries a natural foliation $\SF^* = \coprod_{\SL \in \SF} \pi^{-1}(\SL)$. Additionally, it is endowed with a canonical $1$--form:
 \[ \lambda_{(p,w)}(v) = w \circ d_{(p,w)}\pi(v), \text{ at a point $(p,w)$, $p \in M$, $w \in T^*_p\SF$.} \]
If $\SL \subset M$ is a leaf of $\SF$ this is nothing but the \textbf{Liouville $1$--form} on $T^*\SL$. Therefore, since $d\lambda$ is a leafwise symplectic form that is globally exact, $(T^*\SF, \SF^*, d\lambda)$ is a strong symplectic foliation.

Fix a leafwise metric $g$ in $M$. Then there is a bundle isomorphism $\#: T^*\SF \to T\SF$. This defines a metric in $T^*\SF$ by setting $g^*(w_1,w_2) = g(\# w_1, \# w_2)$. The presence of $g^*$ allows one to consider the unit cotangent bundle $\mathbb{S}(T^*\SF)$ as a submanifold of $T^*\SF$ transverse to $\SF^*$.

The intersection of $\mathbb{S}(T^*\SF)$ with a leaf $\SL$ is by construction the sphere bundle $\mathbb{S}(T^*\SL)$, which endowed with the form $\lambda$ corresponds to the contact manifold which is called the \textbf{space of oriented contact elements}. Therefore $(\mathbb{S}(T^*\SF), \SF^* \cap \mathbb{S}(T^*\SF), \ker(\lambda))$ is a contact foliation. We call it the \textbf{space of foliated oriented contact elements}.

\begin{lemma}
The Reeb flow in $(\mathbb{S}(T^*\SF), \SF^* \cap \mathbb{S}(T^*\SF), \lambda)$ coincides with the leafwise cogeodesic flow of $g$. 
\end{lemma}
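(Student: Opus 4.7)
The plan is to reduce the statement to the classical, non-foliated theorem that the Reeb flow of the canonical contact form on the unit cotangent bundle of a Riemannian manifold is the cogeodesic flow. There is no real analytic content beyond that reduction; the only care needed is in verifying that all of the foliated objects restrict on each leaf to their standard leafwise counterparts.

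First, I would unpack the leafwise structure of the objects involved. By construction, the leaves of $\SF^*$ are the preimages $\pi^{-1}(\SL)$ for $\SL \in \SF$, and restricted to any such leaf, $\pi\colon T^*\SL \to \SL$ is the ordinary cotangent bundle. From the pointwise formula for $\lambda$, its pullback to $\pi^{-1}(\SL) = T^*\SL$ is exactly the Liouville $1$-form of $T^*\SL$. Similarly the leafwise metric $g^*$ restricts on $T^*\SL$ to the dual metric of $g|_\SL$, so the intersection $\mathbb{S}(T^*\SF) \cap \pi^{-1}(\SL)$ is nothing but the unit cotangent bundle $\mathbb{S}(T^*\SL)$ equipped with its canonical contact form $\lambda|_{\mathbb{S}(T^*\SL)}$.

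Second, I would observe that the three conditions defining the Reeb field $R$ associated to the extension $\alpha$ of $\lambda$ involve only leafwise data: $R \in \Gamma(T\SF^*)$, $(i_R\, d\lambda)|_{T\SF^*} = 0$, and $\lambda(R) = 1$. Restricting to a leaf $\mathbb{S}(T^*\SL)$, these become exactly the defining conditions for the classical Reeb vector field of the contact form $\lambda|_{\mathbb{S}(T^*\SL)}$. Hence $R|_{\mathbb{S}(T^*\SL)}$ is the standard Reeb vector field on the unit cotangent bundle of $(\SL, g|_\SL)$.

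Finally, I would invoke the classical Riemannian-geometric fact: on the unit cotangent bundle $\mathbb{S}(T^*N)$ of a Riemannian manifold $(N,h)$, the Reeb vector field of the canonical contact form generates the cogeodesic flow of $h$. Applying this leafwise to each $(\SL, g|_\SL)$ identifies $R$ with the leafwise cogeodesic flow, which is precisely the statement. The main (and essentially only) obstacle is the bookkeeping in the first two steps: making the identifications of $\lambda$, $g^*$, $\mathbb{S}(T^*\SF)$, and $R$ with their leafwise counterparts sufficiently precise that the classical theorem can be quoted without modification.
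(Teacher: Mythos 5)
Your proposal is correct and follows exactly the route the paper indicates: the paper's proof consists of the single remark that the lemma ``can be proved just as in the case of contact manifolds (see \cite[Theorem 1.5.2]{Ge}),'' i.e.\ one checks that the foliated objects restrict leafwise to the classical ones and then quotes the non-foliated theorem. Your write-up simply spells out that reduction in the detail the paper leaves implicit.
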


This lemma can be proved just as in the case of contact manifolds (see \cite[Theorem 1.5.2]{Ge}). This construction will be used in Subsection \ref{ssec:tight}.

\subsubsection{The symplectisation of a contact foliation}

\begin{definition}
Let $(M^{2n+1+m}, \SF^{2n+1}, \xi^{2n})$ be a contact foliation. Let $\Theta^{2n+m} \subset TM$ be an extension of $\xi$, and let $\alpha$ be a defining $1$--form for $\Theta$, $\ker(\alpha) = \Theta$. 

We say that 
\[ (\R \times M, \SF_\R = \coprod_{\SL \in \SF} \R \times \SL, \omega = d(e^t\alpha)),  \text{ with $t$ the coordinate in $\mathbb{R}$, }\]
is the \textbf{symplectisation} of $(M, \SF, \xi)$. 
\end{definition}

The symplectisation is another instance of a strong symplectic foliation. Restricted to every individual leaf this is the standard symplectisation of the contact structure on the leaf. 

We are abusing notation and we are writing $\alpha$ for $\pi^* \alpha$, where $\pi: \R \times M \to M$ is the projection onto the second factor. We will also write $\xi$ for the restriction of $(d\pi)^{-1} \xi$ to the level $T(\{t\} \times M)$ and $R$ for the lift of the Reeb vector field $R$ to $\{t\} \times M$. 

Let us also introduce the projection $\pi_\xi: T(\R \times M) \to \xi$ along the $\partial_t$ and $R$ directions.

\section{Several examples}

\subsection{Non--complete Reeb vector fields with no closed orbits} \label{ssec:nonComplete}

It is first reasonable to wonder about the Weinstein conjecture for open manifolds in general. In this direction, not much is known. In \cite{vdBPV} and its sequel \cite{vdBPRV} it is shown that the Weinstein conjecture holds for non--compact energy surfaces in cotangent bundles as long as one imposes certain topology conditions on the hypersurface and certain growth conditions on the hamiltonian, which is assumed to be of mechanical type. 

\begin{proposition} \label{prop:openLeaf}
Let $(N^{2n+1}, \xi)$ be an open contact manifold. Then there is a contact form $\alpha$, $\ker(\alpha) = \xi$, whose (possibly non--complete) associated Reeb flow has no periodic orbits. 
\end{proposition}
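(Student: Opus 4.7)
The plan is to construct a contact form $\alpha$ defining $\xi$ together with a proper function $h:N\to[0,\infty)$ such that $R_\alpha(h)>0$ pointwise; once such a Lyapunov function is in hand, every Reeb trajectory is strictly monotone in $h$ and so cannot be periodic.

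Start from any contact form $\alpha_0$ for $\xi$ with Reeb field $R_0$, and choose a proper Morse function $h:N\to[0,\infty)$, which exists because $N$ is open. I would look for $\alpha=g\alpha_0$ with $g>0$. A direct computation yields
\[
R_\alpha \;=\; \tfrac{1}{g}\,R_0 \;+\; \tfrac{1}{g^{2}}\,X_g,\qquad \iota_{X_g}\,d\alpha_0\big|_\xi \;=\; dg\big|_\xi,
\]
and hence
\[
R_\alpha(h) \;=\; \tfrac{1}{g}\,R_0(h) \;+\; \tfrac{1}{g^{2}}\,X_g(h).
\]
Away from the critical locus of $h$, positivity of this expression is a first-order differential inequality on $g$, which is locally solvable since $d\alpha_0|_\xi$ is fibrewise symplectic and $dh|_\xi$ is generically non-zero; a partition of unity subordinate to a regular cover then produces a suitable $g$ on the complement of arbitrarily small balls around $\operatorname{Crit}(h)$.

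The difficulty is that no choice of conformal factor can achieve $R_\alpha(h)>0$ at a critical point of $h$, since $dh$ vanishes there. To handle this, I would apply Lemma~\ref{coro:BEM} to contact-embed a small neighbourhood of each $p\in\operatorname{Crit}(h)$ into a fixed overtwisted model chart, import from the model a Reeb flow in which orbits entering the chart exit it in finite time into the higher-$h$ region, and pull the resulting contact form back to $N$. Since the Reeb flow is allowed to be incomplete, orbits may even be arranged to leave every compact set of $N$ in finite forward time. After patching, $h$ is slightly perturbed near each critical point so that the critical value is bypassed by every orbit, making $h$ a genuine Lyapunov function for the final $R_\alpha$.

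The main obstacle, and where most of the work goes, is the critical-point step: one must ensure that the overtwisted model admits a Reeb dynamic in which no orbit can ``wrap around'' a critical-point ball and return to it, so that patching is safe. This is in essence a local Reeb-plug construction, enabled by the flexibility granted by Lemma~\ref{coro:BEM}.
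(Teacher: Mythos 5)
Your strategy is genuinely different from the paper's. The paper does not attempt to build a Lyapunov function for the Reeb flow; instead it fixes a non--degenerate contact form $\alpha_{ot}$ for an auxiliary \emph{overtwisted} structure $\xi_{ot}$ on $N$ (homotopic to $\xi$ as almost contact structures), observes that its closed Reeb orbits are isolated and countable, and then constructs a properly embedded tree $T$ that meets every closed orbit and escapes to infinity, together with a contractible neighbourhood $V$ of the overtwisted disc. Removing a tubular neighbourhood of $T \cup V$ leaves a domain $N' \subset N$, diffeomorphic to $N$ via an isotopy of the identity, that contains no closed Reeb orbit of $\alpha_{ot}$. The h--principle of Lemma~\ref{coro:BEM} then upgrades this diffeomorphism to an isocontact embedding $\tilde f : (N,\xi) \to (N'\cup V, \xi_{ot})$, and the pullback $\tilde f^*\alpha_{ot}$ is the desired form. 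The whole point is to \emph{export} the closed orbits out of the image, not to make the flow gradient--like.

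Your Lyapunov approach has a gap that I do not think can be closed along the lines you indicate. The obstruction at $\operatorname{Crit}(h)$ is genuine --- a proper function on an open manifold always has critical points, at least a minimum --- and no choice of conformal factor makes $R_{g\alpha_0}(h)>0$ there. Your proposed remedy is to invoke Lemma~\ref{coro:BEM} to ``import a Reeb flow'' near each critical point, but that lemma produces isocontact \emph{embeddings} of contact \emph{structures}; it gives no control whatsoever over the Reeb dynamics of a specified contact \emph{form}, and in particular does not let you glue a local plug--like Reeb flow to the flow of $g\alpha_0$ outside the ball while preserving contactness of $\alpha$ and monotonicity of $h$. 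The sentence about perturbing $h$ so that ``the critical value is bypassed by every orbit'' also does not parse: a local minimum of a proper $h$ forces any Reeb orbit passing near it to decrease and then increase $h$, so $h$ cannot be made strictly monotone there without removing the critical point, which you cannot do. Finally, even away from $\operatorname{Crit}(h)$, the step ``a partition of unity produces a suitable $g$'' is not automatic: after clearing denominators the condition is a linear first--order inequality $gR_0(h)+L(dg)>0$, and convex combinations $g=\sum\rho_i g_i$ introduce the uncontrolled terms $L(g_i\,d\rho_i)$, so global solvability is itself a nontrivial (Gromov--type) problem that you would need to argue. In short, the Lyapunov route may be salvageable with considerably more work, but as written it neither handles the critical locus nor justifies the global step; the paper's embedding argument sidesteps both issues entirely.
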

\begin{proof}
Fix some small ball $U \subset N$. Modify $\xi$ within $U$ to introduce an overtwisted disc $\Delta$ in the sense of \cite{BEM}. By applying the relative $h$--principle for overtwisted contact structures, there is $\xi_{ot}$ in $N$ that agrees with $\xi$ outside of $U$ and that has $\Delta$ as an overtwisted disc. This new contact structure is homotopic to the original one as almost contact structures.

Let $\{N_i\}_{i \in \mathbb{N}}$ be an exhaustion of $N$ by compact sets, $N_i \subset N_{i+1}$. Fix a non--degenerate contact form $\alpha_{ot}$ for the overtwisted structure $\xi_{ot}$. Its closed Reeb orbits are isolated and countable; moreover, we may assume that no closed orbit is fully contained in $\Delta$. We index them as follows: each compact set $N_i$ is intersected by finitely many closed orbits and hence we write $\{\gamma^i_j\}_{j \in I_i}$ for the collection of closed Reeb orbits intersecting $N_i$ but not $N_{i-1}$. 

Construct a path $\beta: [0,\infty) \to N$, avoiding $\Delta$, that is proper and such that $N \setminus \beta([0,\infty))$ is diffeomorphic to $N$ by a map isotopic to the identity. Then, for each $i$, and each $j \in I_i$, we can construct paths $\beta^i_j: [0,1] \to N_i$ such that the $\beta_j^i$ are all pairwise disjoint, they intersect $\im(\beta)$ only at $\beta^i_j(0) \in \im(\beta)$, they satisfy $\beta^i_j(1) \in \gamma^i_j \cap N_i$, and they avoid $\Delta$. 

Since the images of $\beta$ and the $\beta^i_j$ avoid $\Delta$, we can fix a closed contractible neighbourhood $V$ of $\Delta$ disjoint from them as well. Construct a path $\beta_{ot}: [0,1] \to N$ with $\beta_{ot}(0) \in \partial V$, $\beta_{ot}(1) \in \im(\beta)$ and otherwise avoiding $V$ and all other paths.

Consider the tree $T = \beta \cup \{\cup_{i \in \mathbb{N}, j \in I_i} \beta_j^i \} \cup \beta_{ot}$. Denote by $\nu(T)$ a small closed neighbourhood that deformation retracts onto $T$. We can assume that $N$ is diffeomophic to $N' = N \setminus (\nu(T) \cup V)$ by a diffeomorphism $f: N \to N'$ that is isotopic to the identity.

The embedding $f: (N,\xi) \to (N' \cup V,\xi_{ot})$ has image $N'$ and is covered by a contact bundle homomorphism. This follows because $f$ is isotopic to the identity in $N$ and $\xi$ and $\xi_{ot}$ are homotopic. Now an application of Lemma \ref{coro:BEM} implies that there is an isocontact embedding $\tilde f: (N,\xi) \to (N' \cup V, \xi_{ot})$. The form $\alpha_{ot}$ has no periodic orbits in $N' \cup V$ by construction and hence the pullback form $\alpha = \tilde f^*\alpha_{ot}$ does not either. 
\end{proof}

\begin{remark}
A natural open question is whether it is true that every open contact manifold can be endowed with a contact form inducing a complete Reeb flow with no closed orbits.
\end{remark}

\subsection{The Weinstein conjecture does not hold for contact foliations with all leaves tight} \label{ssec:tight}

We shall construct first a contact foliation with all leaves tight and with periodic orbits lying in the only compact leaf.

\begin{proposition} \label{prop:geodesic}
Let $(\mathbb{S}^3, \SF_{Reeb})$ be the Reeb foliation on the $3$--sphere and let $g$ be the round metric in $\mathbb{S}^3$. Consider the contact foliation $(\mathbb{S}^3 \times \mathbb{S}^1, \lambda_{can})$ on the unit cotangent bundle of $\SF_{Reeb}$. Its only closed Reeb orbits lie in the compact torus leaf. 
\end{proposition}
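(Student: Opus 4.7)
Plan of proof:

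My plan is to combine the lemma identifying Reeb orbits on $\mathbb{S}(T^*\SF)$ with leafwise cogeodesics with a concrete analysis of the induced metrics on the leaves. Write $\mathbb{S}^3 \subset \mathbb{C}^2$ as $V_1 \cup_T V_2$, where $V_i = \{|z_i|^2 \leq 1/2\}$, glued along the Clifford torus $T = \{|z_1|^2 = |z_2|^2 = 1/2\}$. The Reeb foliation $\SF_{Reeb}$ has $T$ as its unique compact leaf, while the interior of each $V_i$ is foliated by non-compact planes that spiral toward $T$.

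On the Clifford torus side, $T$ is a minimal surface of $\mathbb{S}^3$ with principal curvatures $\pm 1$, so by Gauss's equation it is intrinsically flat. Hence $(T, g|_T)$ is a flat $2$-torus and carries infinitely many closed geodesics, for instance the two obvious circle factors, each producing a closed Reeb orbit.

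To rule out closed Reeb orbits on the planar leaves I would exploit the large symmetry of the round metric. The $T^2$ action on $\mathbb{C}^2$ rotating each factor acts by isometries on $\mathbb{S}^3$. Choose the standard model in which the Reeb foliation of $V_1$ has leaves $\arg z_2 = g(|z_1|) + c$, with $g'(0) = 0$ and $g(r) \to \infty$ as $r \to 1/\sqrt{2}$; each such leaf is manifestly invariant under the $\mathbb{S}^1$-subaction rotating $z_1$ and fixes one point (where the leaf meets the core circle $\{z_1 = 0\}$). Thus the induced metric on any planar leaf is a surface of revolution. A direct computation from
\[ ds^2_{\mathbb{S}^3} \;=\; \frac{dr^2}{1-r^2} + r^2\, d\theta^2 + (1-r^2)\, d\phi^2, \qquad r = |z_1|, \]
gives, after reparametrising by meridional arclength $s$, the warped-product metric $ds^2 + \rho(s)^2\, d\theta^2$ on $[0, \infty) \times \mathbb{S}^1$, where $\rho$ is strictly increasing with $\rho(0) = 0$ and $\rho(s) \to 1/\sqrt{2}$ as $s \to \infty$.

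With this picture, Clairaut's relation $\rho^2 \dot\theta = \mathrm{const}$ forbids closed geodesics: since $\rho$ has no critical point, no parallel $s = \mathrm{const}$ is geodesic; any non-meridian unit-speed geodesic with Clairaut constant $c \in (0, 1/\sqrt{2})$ has exactly one turning point at $\rho = c$ and then escapes monotonically to $s = \infty$; and meridians pass through the pole and also run off to infinity. The same analysis applies to $V_2$ by symmetry, so no planar leaf admits a closed geodesic. The main obstacle is identifying the rotational symmetry of the induced metric on each planar leaf and verifying the monotonicity of the profile $\rho$; once this is in place the Clairaut argument is routine.
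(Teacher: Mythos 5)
Your proposal is correct and follows essentially the same route as the paper: identify Reeb orbits with leafwise geodesics, observe that the Clifford torus is intrinsically flat, and show that the induced metric on each planar leaf is a warped product $ds^2 + \rho(s)^2\,d\theta^2$ with $\rho$ strictly increasing, whence no closed geodesics. The only genuine difference is the final step: where you invoke the rotational symmetry directly and apply Clairaut's relation $\rho\sin\psi = \mathrm{const}$ (no critical parallel, at most one turning point, so every geodesic escapes), the paper instead computes Christoffel symbols for a general metric $dr^2 + f(r)\,d\theta^2$ with $f' > 0$ and reads off from the geodesic equation $\ddot r = f'\dot\theta^2 > 0$ that $\dot r$ is eventually positive. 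These are equivalent manifestations of the same monotonicity; Clairaut's relation is the corresponding first integral, so your version is arguably a cleaner packaging of the identical geometric fact. One small caveat worth making explicit when writing this up: the profile $\rho$ is bounded (tending to $1/\sqrt{2}$, not to infinity), so you should say a word about completeness of the leafwise metric --- the paper handles this by verifying that the meridional arclength is infinite (the blow-up of $g'(r)$ near $r = 1/\sqrt{2}$ makes $ds/dr$ non-integrable) --- to ensure geodesics that escape every compact set really exist for all time rather than leaving the leaf.
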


The proposition is an easy consequence of the following Lemma.

\begin{lemma}
Consider the Riemannian manifold $(\mathbb{R}^2,g)$, where $g$ is of the form $dr \otimes dr + f(r) d\theta \otimes d\theta$, with $f(r)$ an increasing function with $f(r) = r^2$ close to the origin. $(\mathbb{R}^2,g)$ has no closed geodesics.
\end{lemma}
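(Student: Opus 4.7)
My plan is to exploit the rotational symmetry of $g$ to extract two first integrals of the geodesic flow, reduce the problem to a one-dimensional ODE in $r$, and then show that the radial coordinate is unbounded along every geodesic. Specifically, $\partial_\theta$ is a Killing field for $g$, so along any geodesic $\gamma(s)=(r(s),\theta(s))$ parametrized by arc length the angular momentum $c := f(r)\dot\theta$ is conserved, and unit speed gives $\dot r^2 + f(r)\dot\theta^2 = 1$. Eliminating $\dot\theta$ yields the reduced first-order ODE
\begin{equation*}
\dot r^{\,2} \;=\; 1 - \frac{c^2}{f(r)}.
\end{equation*}

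Next I will split into cases according to the value of $c$. If $c=0$, then $\dot\theta\equiv 0$ away from the origin, so $\gamma$ is a straight radial line in the Euclidean chart near $0$ and, since it has no other turning points, it escapes to spatial infinity and cannot close. If $c\neq 0$, the positivity constraint $f(r)\geq c^2$ together with the hypothesis that $f$ is strictly increasing from $f(0)=0$ produces a unique $r_0>0$ with $f(r_0)=c^2$, and $\gamma$ is confined to $\{r\geq r_0\}$. At a turning point $r=r_0$ the radial Euler--Lagrange equation $\ddot r=\tfrac{1}{2}f'(r)\dot\theta^{\,2}$ evaluates to $\ddot r = f'(r_0)/(2c^2) > 0$, so $r_0$ is a strict local minimum of $r$. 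After passing through $r_0$ the radial coordinate is monotonically increasing, and since $\dot r^{\,2}=1-c^2/f(r)\to 1$ as $r\to\infty$ we conclude $r(s)\to\infty$. Thus $\gamma$ is not closed.

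The only substantive point is the monotonicity of $f$. On a general surface of revolution, closed geodesics correspond exactly to radial oscillations between two consecutive turning points $r_-<r_+$ of the equation $f(r)=c^2$ (as on the round sphere, where $f$ vanishes at both poles); the hypothesis that $f$ is strictly increasing eliminates the outer turning point and forbids such oscillations. Everything else is a routine application of Noether's theorem to the warped-product Lagrangian. The main obstacle, if any, is purely bookkeeping: verifying that the reduced equation behaves as claimed at the origin, where the metric is smooth because $f(r)=r^2$ there, so the $c=0$ case really does give unbroken radial lines across $0$.
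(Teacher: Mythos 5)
Your proof is correct, and it takes a somewhat different route from the paper's. You exploit rotational symmetry via the Clairaut first integral $c = f(r)\dot\theta$ together with the unit-speed constraint, reducing the geodesic flow to the one-dimensional system $\dot r^{\,2} = 1 - c^2/f(r)$ and then reading off that a non-radial geodesic has at most one turning point (a strict minimum of $r$ at $r_0 = f^{-1}(c^2)$) and escapes to infinity. The paper instead writes down the Christoffel symbols and the geodesic equations directly, noting that $\ddot r = (f'/2)\,\dot\theta^{\,2} > 0$ along any non-radial geodesic (the paper drops the $1/2$, a harmless typo), so $r(s)$ is strictly convex and therefore cannot be periodic: either $\dot r \geq 0$ at some point and then $r$ increases forever, or $\dot r < 0$ for all time, and in neither case can $r$ close up. The two arguments are of comparable length; yours makes the turning-point picture explicit and matches the standard treatment of surfaces of revolution, while the paper's convexity observation is slightly more economical and sidesteps having to discuss whether the geodesic actually reaches its turning point. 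One small point in your write-up worth tightening: when $\dot r(0) < 0$ you assert that $r$ "passes through" $r_0$, but the cleaner statement (and the one the paper uses) is simply that $r$ is strictly monotone on any maximal interval where $\dot r$ keeps a sign, and $\ddot r > 0$ at any zero of $\dot r$, so $r$ has at most one local extremum and hence is never periodic regardless of whether the turning point is attained.
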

\begin{proof}
Applying the Koszul formula yields the following equations for the Christoffel symbols:
\[ g(\nabla_{\partial_r}\partial_\theta, \partial_\theta) = f'/2 =  \Gamma_{r\theta}^\theta g(\partial_\theta, \partial_\theta) = \Gamma_{r\theta}^\theta f, \]
\[ g(\nabla_{\partial_\theta}\partial_r, \partial_\theta) = f'/2 =  \Gamma_{\theta r}^\theta g(\partial_\theta, \partial_\theta) = \Gamma_{\theta r}^\theta f, \]
\[ g(\nabla_{\partial_\theta}\partial_\theta, \partial_r) = -f'/2 =  \Gamma_{\theta \theta}^r g(\partial_r, \partial_r) = \Gamma_{\theta \theta}^r. \]
And hence the geodesic equations read:
\[ \overset{..}{r} = f'\overset{.}{\theta}^2, \]
\[ \overset{..}{\theta} = -\log(f)'\overset{.}{\theta}\overset{.}{r}. \]

If at any point $\overset{.}{\theta}=0$, then $\overset{.}{\theta} = 0$ for all times and $\overset{.}{r}$ is a constant. This situation corresponds to radial lines. 

All other geodesics have always $\overset{.}{\theta} \neq 0$ and hence $\overset{..}{r} > 0$. In particular, as soon as a geodesic has $\overset{.}{r} \geq 0$ at some point, it will have $\overset{.}{r} > 0$ for all the points in the forward orbit and hence it will not close up. 

For a geodesic to close up we deduce then that it must have $\overset{.}{r} < 0$ for all times, but then it cannot close up either.
\end{proof}

\begin{proof}[Proof of Proposition \ref{prop:geodesic}]
Consider $\mathbb{S}^3$ lying in $\mathbb{C}^2$, with coordinates $(z_1,z_2) = (r_1,\theta_1,r_2,\theta_2)$. The Reeb foliation can be assumed to have the Clifford torus $|z_1|^2 = |z_2|^2 = 1/2$ as its torus leaf. One of the solid tori, denote it by $T$, corresponds to $\{|z_1|^2 \leq 1/2, |z_2|^2 = 1 - |z_1|^2\}$ and the other one is given by the symmetric equation. Let us multiple cover the torus $T$ with the map $\phi: \mathbb{R} \times \mathbb{D}^2_{1/\sqrt{2}} \rightarrow T$ given by $\phi(s,r,\theta) = (r,\theta,\sqrt{1-r^2},s)$. For all purposes we can work in $\mathbb{R} \times \mathbb{D}^2_{1/\sqrt{2}}$, which is the universal cover of the torus, and hence we shall do so. 

The restriction of the flat metric of $\mathbb{C}^2$
\[ g = \sum_{i=1,2} dr_i \otimes dr_i + r_i^2 d\theta_i \otimes d\theta_i \]
to $\mathbb{S}^3$ is precisely the round metric. In the parametrisation of $T$ given above it reads as:
\[ \phi^* g = \dfrac{1}{1-r^2} dr \otimes dr + r^2 d\theta \otimes d\theta + (1-r^2) ds \otimes ds. \]
Which in particular readily shows that the metric induced in the Clifford torus is flat.

Consider the embeddings 
\[ \psi_c: \mathbb{R}^2 \rightarrow \mathbb{R} \times \mathbb{D}^2_{1/\sqrt{2}} \]
\[ \psi_c(\rho,\theta) = (f(\rho)+c,\dfrac{\rho}{\sqrt{2}(1+\rho)},\theta), \]
with $f: \mathbb{R} \to \mathbb{R}$ a smooth increasing function that agrees with $\rho^2$ near the origin and with the identity away from it. They realise the non--compact leaves of the Reeb foliation in $T$. It is clear that the leafwise metric is of the form
\[ \psi_c^* \phi^* g = h_1(\rho) d\rho \otimes d\rho + h_2(\rho) d\theta \otimes d\theta \]
with $h_2(\rho)$ increasing and converging to $1/2$ as $\rho \rightarrow \infty$ and $h_1(\rho)$ bounded from above and behaving as $O(\rho)$ near the origin. 

At every point of $\mathbb{R}^2$ a vector field $X$ pointing radially and of unit length can be defined. The properties of $h_1$ imply that $X$ is complete and following $X$ yields a reparametrisation $\Phi: \mathbb{R}^2 \rightarrow \mathbb{R}^2$ that satisfies:
\[ \Phi^*\psi_c^* \phi^* g = d\rho \otimes d\rho + \tilde{h}(\rho) d\theta \otimes d\theta \]
with $\tilde{h}(\rho)$ still increasing and converging to $1/2$ as $\rho \rightarrow \infty$. Now the Lemma yields the result. 
\end{proof}

\begin{remark}
Taking the universal cover of a leaf yields the standard tight $\mathbb{R}^3$, so all leaves are tight.
\end{remark}

One can actually construct a contact foliation with no periodic orbits of the Reeb.

\begin{proposition} \label{prop:noGeodesics}
Consider the manifold $\mathbb{T}^3$, endowed with the Euclidean metric $g$, and the foliation $\SF$ by planes given by two rationally independent slopes. The space of foliated cooriented contact elements $\mathbb{S}(T^*\SF)$ has no closed Reeb orbits.
\end{proposition}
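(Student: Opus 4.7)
The strategy is to reduce the statement to the classical fact that the flat Euclidean plane carries no closed geodesics. The lemma preceding this section identifies the Reeb flow of $\lambda_{can}$ on $\mathbb{S}(T^*\SF)$ with the leafwise cogeodesic flow of $g$. Since the Reeb vector field is tangent to the leaves of $\SF^* \cap \mathbb{S}(T^*\SF)$, any closed Reeb orbit is contained in a single such leaf, namely $\mathbb{S}(T^*\SL)$ for some leaf $\SL$ of $\SF$; projecting to the base, it becomes a closed geodesic of $(\SL, g|_\SL)$. It therefore suffices to argue that no leaf of $\SF$ admits a closed geodesic.

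Next, I would check that every leaf of $\SF$, equipped with the induced metric, is isometric to the flat Euclidean plane. Pulling back along the universal covering $\mathbb{R}^3 \to \mathbb{T}^3$, the foliation $\SF$ lifts to the foliation by affine $2$--planes orthogonal to a fixed direction $(1,\alpha,\beta)$ with $1,\alpha,\beta$ rationally independent over $\mathbb{Q}$. Let $P$ be such a plane; after translating $P$ to pass through the origin, rational independence forces $P \cap \mathbb{Z}^3 = \{0\}$, so the covering projection restricts to an injective immersion of $P$ onto its leaf $\SL \subset \mathbb{T}^3$. Since this projection is a local isometry and $P$ inherits the flat Euclidean metric from $\mathbb{R}^3$, one obtains an isometry $(\SL, g|_\SL) \cong (\mathbb{R}^2, g_{flat})$.

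The geodesics of $(\mathbb{R}^2, g_{flat})$ are the affine lines, which are never periodic, so no leaf of $\SF$ admits a closed geodesic and the proposition follows. The only place where the arithmetic hypothesis is used is the identification of the leaves with $\mathbb{R}^2$: had the slopes been rational, the leaves would close up to $2$--tori, which do carry closed geodesics and would defeat the argument; this is the only conceptual subtlety, and once it is handled the absence of closed Reeb orbits is immediate.
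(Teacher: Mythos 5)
Your argument is correct and takes essentially the same route as the paper: identify the Reeb flow with the leafwise cogeodesic flow, show each leaf of $\SF$ is isometric to the flat Euclidean plane using the rational-independence hypothesis (so that the lifted plane meets $\mathbb{Z}^3$ only at the origin), and invoke the absence of closed flat geodesics. The paper's version is terser but additionally records that the leaves of the contact foliation are tight, being universally covered by the standard tight $\mathbb{R}^3$ --- a remark outside the formal statement of the proposition but needed for the subsection's broader claim that tightness alone does not force closed Reeb orbits.
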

\begin{proof}
Let $\SL$ be any leaf of $\SF$. $\SL$ is diffeomorphic to $\mathbb{R}^2 \times \mathbb{S}^1$ and its universal cover of is the standard tight $\mathbb{R}^3$. Hence it is a tight contact manifold. Since the restriction of $g$ to $\SL$ is Euclidean, there are no closed geodesics on $\SL$ and hence no closed Reeb orbits in its sphere cotangent bundle.
\end{proof}

\subsection{A sharp example. Overtwisted leaves with no closed orbits} \label{ssec:sharp}

\subsubsection{$\mathbb{R}^3$ overtwisted at infinity with no closed orbits}

Consider the following $1$--form in $\mathbb{R}^3$ in cylindrical coordinates:
\[ \alpha = \cos(r) dz + (r\sin(r) + f(z)\phi(r))d\theta \]
If $f(z)\phi(r) = 0$ identically, this is the standard form $\alpha_{ot}$ for the contact structure $\xi_{ot}$ that is overtwisted at infinity. We well henceforth assume that $f(z)\phi(r)$ is $C^1$--small, and therefore $\alpha$ will be a contact form as well. In particular, by Proposition \ref{prop:El93}, the contact structure it defines is contactomorphic to $\xi_{ot}$. Let us compute:
\[ d\alpha = -\sin(r) dr \wedge dz + [\sin(r) + r\cos(r) + \phi'(r)f(z)]dr \wedge d\theta + f'(z)\phi(r) dz \wedge d\theta \]
whose kernel, away from the origin, is spanned by:
\[ X = -f'(z)\phi(r) \partial_r + [\sin(r) + r\cos(r) + \phi'(r)f(z)] \partial_z + \sin(r) \partial_\theta. \]
It is easy to check that $\alpha(X) > 0$ far from the origin, and hence the Reeb is a positive multiple of $X$. 

Assume that $\phi(r)$ is a monotone function that is identically $0$ close to $0$ and identically $1$ in $[\delta, \infty)$, for $\delta > 0$ small. Then the Reeb vector field in the origin is $\partial_z$, and remains almost vertical nearby. Assume further that $f$ is strictly decreasing and $C^1$ small. Then the Reeb has a positive radial component away from the origin. We conclude that it has no closed orbits. 

\subsubsection{$\mathbb{S}^2 \times \mathbb{R}$ overtwisted at infinity with no closed orbits}

Consider coordinates $(z,\theta; s)$ in $\mathbb{S}^2 \times \mathbb{R}$, with $z \in [0, 2\pi]$, and construct the following $1$--form:
\[ \lambda_0 = \cos(z)ds + z(z-2\pi)\sin(z)d\theta. \]
It is easy to see that it is a contact form that defines two families of overtwisted discs sharing a common boundary: $\{ z \in [0,\pi], s = s_0\}$ and $\{ z \in [\pi,2\pi], s = s_0\}$. It is therefore overtwisted at infinity. 

$\lambda_0$ defines two cylinders comprised of Reeb orbits: $\{z = \pi/2\}$ and $\{z = 3\pi/2\}$. Therefore, proceeding like in the previous example, we will add a small perturbation that gets rid of these orbits. Consider the form:
\[ \lambda =  \cos(z)ds + [z(z-2\pi)\sin(z) + f(s)\phi(z)]d\theta. \]
Here we require for $\phi(z)$ to be constant close to the points $0$, $\pi/2$, $\pi$, $3\pi/2$ and $2\pi$, to satisfy:
\[ \phi(0) = \phi(\pi) = \phi(2\pi) = 0, \quad \phi(\pi/2) = \phi(3\pi/2) = 1 \]
and to be monotone in the subintervals inbetween. We assume that $f$ is strictly monotone and $C^1$ small. Computing:
\[ d\lambda =  -\sin(z)dr \wedge ds + [ (z-2\pi)\sin(z) + z\sin(z) + z(z-2\pi)\cos(z) + f(s)\phi'(z)]dz \wedge d\theta + f'(s)\phi(z) ds \wedge d\theta \]
so the Reeb flow is a multiple of:
\[ X = -f'(s)\phi(z) \partial_z + [ (z-2\pi)\sin(z) + z\sin(z) + z(z-2\pi)\cos(z) + f(s)\phi'(z)] \partial_s + \sin(z) \partial_\theta. \]
Near $z = 0, \pi, 2\pi$ the Reeb is very close to $\pm \partial_s$. Away from those points, it has a non--zero $z$--component. It follows that it cannot have closed orbits.

\subsubsection{Constructing the foliation}

Consider $\mathbb{S}^2 \times \mathbb{S}^1 \times \mathbb{S}^1$ with coordinates $(z,\theta;s,t)$, $t \in [0,2]$. It can be endowed with the following $1$--form:
\[ \tilde\lambda =  \cos(z)ds + [z(z-2\pi)\sin(z) + F(t)\phi(z)]d\theta, \]
with $F$ strictly increasing in $(0,1)$, strictly decreasing in $(1,2)$, $C^1$--small and having vanishing derivatives to all orders in $\{0,1\}$. $\phi$ is the bump function defined in the previous subsection.

Let $\Phi: \mathbb{S}^1 \to \mathbb{S}^1$ be a diffeomorphism of the circle that fixes $\{0,1\}$ and no other points, is strictly increasing in $(0,1)$ as a map $(0,1) \to (0,1)$, and is strictly decreasing in $(1,2)$ as a map $(1,2) \to (1,2)$. $\Phi$ defines a foliation $\SF_\Phi$ on $\mathbb{S}^2 \times \mathbb{S}^1 \times \mathbb{S}^1$ called the \textbf{suspension} of $\Phi$.

$\SF_\Phi$ can be constructed as follows. Find a family of functions $\Phi_s: \mathbb{S}^1 \to \mathbb{S}^1$, $s \in [0,1]$, satisfying:
\begin{equation} \begin{cases}
 \Phi_0 = \Id, \quad \Phi_1 = \Phi,  \\
 \text{ the map } s \to \Phi_s(t) \text{ is strictly increasing in $(0,1)$ and strictly decreasing in $(1,2)$}, \\
 \left.\dfrac{\partial}{\partial s}\right|_{s=1} \Phi_s(t) = \left.\dfrac{\partial}{\partial s}\right|_{s=0} \Phi_s(\Phi_1(t)) \quad \text{ for all } t. 
\end{cases} \end{equation}
Then the curves $\gamma_t(s) = (s,\Phi_s(t))$, induce a foliation in $[0,1] \times \mathbb{S}^1$ which glues to yield a foliation by curves in the $2$--torus. $\SF_\Phi$ is the lift of such a foliation.

The leaves of the foliation in the $2$--torus are obtained by concatenating the segments $\gamma_t$. $\gamma_0$ and $\gamma_1$ yield closed curves $\tilde\gamma_0$ and $\tilde\gamma_1$. All other curves are diffeomorphic to $\mathbb{R}$, and we denote them by $\tilde\gamma_t(s) = (s,h_t(s))$, $t \in (0,1) \cup (1,2)$. By our assumption on $\Phi_s$, the functions $h_t$ are strictly increasing if $t \in (0,1)$ and strictly decreasing if $t \in (1,2)$. Observe that the non--compact leaves accumulate against the two compact ones. 

The contact structure in the compact leaves $\mathbb{S}^2 \times \tilde\gamma_t$, $t=0,1$, is given by 
\[ \cos(z)ds + z(z-2\pi)\sin(z)d\theta. \] 
In particular, they both have infinitely many closed orbits.

The contact structure in the non compact leaves $\mathbb{S}^2 \times \tilde\gamma_t$, $t \in (0,1) \cup (1,2)$, reads 
\[ \cos(z)ds + [z(z-2\pi)\sin(z) + F(h_t(s))\phi(z)]d\theta \] 
Since $F \circ h_t$ is non--zero, strictly monotone and $C^1$--small, it is of the form described in the previous section. It follows that they have no periodic orbits. 

\begin{remark}
In this example all leaves involved are overtwisted. Further, the non--compact leaves are overtwisted at infinity. It would be interesting to construct an example of a contact foliation where the non--compact leaves are overtwisted, the leaves in their closure are tight and the only periodic orbits appear in the tight leaves. 
\end{remark}

\section{$J$--holomorphic curves in the symplectisation of a contact foliation}

In this section we generalise the standard setup for moduli spaces of pseudoholomorphic curves to the foliated setting. The main result is Theorem \ref{thm:removalSingularities}, which deals with the removal of singularities. The proof is standard and closely follows that of \cite{Ho}, and indeed the only essential difference lies in the fact that, although the leaves might be open, they live inside a compact ambient manifold, so the Arzel\'a--Ascoli theorem can still be applied when carrying out the bubbling analysis.

\subsection{Setup} 

Consider the contact foliation $(M^{m+2n+1}, \SF^{2n+1}, \xi^{2n})$, with extension $\Theta^{2n+m}$ given by a $1$--form $\alpha$, and write $(\R \times M, \SF_\mathbb{R}, \omega)$ for its symplectisation. 

\subsubsection{The space of almost complex structures}

The symplectic bundle $(\xi, d\alpha)$ can be endowed with a complex structure compatible with $d\alpha$, which we denote by $J_\xi$. The space of such choices is non--empty and contractible. $J_\xi$ induces a unique $\mathbb{R}$--invariant leafwise complex structure, $J \in \End(T\SF_\mathbb{R})$, $J^2 = -\Id$, as follows:
\[ J|_{\xi} = J_\xi \]
\[ J(\partial_t) = R \]
Observe that $J$ is \textbf{compatible} with $\omega$, and hence they define a metric, which turns each leaf of the symplectisation into a manifold which is not complete. Instead, we shall consider the better behaved $\mathbb{R}$--invariant leafwise riemannian metric $g$ in $\R \times \SF$ given by:
\[ g = dt \otimes dt + \alpha \otimes \alpha + d\alpha(J_\xi \circ \pi_\xi , \pi_\xi ). \]

\subsubsection{$J$--holomorphic curves}

Let $(S, i)$ be a Riemann surface, possibly with boundary. A map satisfying
\begin{equation} \begin{cases} \label{eq:holCurves}
  F: (S,i) \to (\R \times M,J) \\
	dF(TS) \subset \SF_\mathbb{R} \\
	J \circ dF = dF \circ i
\end{cases} \end{equation}
is called a parametrised \textbf{foliated $J$--holomorphic curve}. The second condition implies that $F(S)$ is contained in a leaf $\R \times \SL$ of $\SF_\mathbb{R}$. Indeed, $J$ is an almost complex structure in the open manifold $\R \times \SL$, and $F$, regarded as a map into $\R \times \SL$, is a \textbf{$J$--holomorphic curve} in the standard sense.

By our choice of $J$, there is an $\mathbb{R}$--action on the space of foliated $J$--holomorphic curves given by translation on the $\mathbb{R}$ term of $\R \times M$.

\subsubsection{Foliated $J$--holomorphic planes and cylinders}

A solution of Equation (\ref{eq:holCurves})
\[ F = (a,u): (\mathbb{C},i) \rightarrow (\R \times M,J) \]
is called a \textbf{foliated $J$--holomorphic plane}. If we write $\mathcal{M}_J^\SF$ for the space of such maps, it is clear that the space of complex automorphisms of $\mathbb{C}$ acts on it by its action on the domain.

$\mathcal{M}_J^\SF$ is non--empty. Every Reeb orbit $\gamma: \mathbb{R} \rightarrow M$ has an associated foliated $J$--holomorphic plane given by
\[ F(s,t) = (s,\gamma(t))\quad \text{ where $z = s+it$ are the standard complex coordinates in $\mathbb{C}$.}\]
We call these the \textsl{trivial} solutions. 

Similarly, a solution of Equation \ref{eq:holCurves} 
\[ F = (a,u): (-\infty, \infty) \times \mathbb{S}^1 \rightarrow \R \times M \]
is called a \textbf{foliated $J$--holomorphic cylinder}. We let $(s,t)$ be the coordinates in the cylinder and its complex structure to be given by $i(\partial_s) = \partial_t$. A closed Reeb orbit $\gamma: \mathbb{S}^1 \to M$, gives a \textsl{trivial} cylinder $F(s,t) = (s,\gamma(t))$.

Recall that the cylinder $(-\infty, \infty) \times \mathbb{R}$ is biholomorphic to $\mathbb{C} \setminus \{0\}$ by the exponential, and for convenience we will often consider both domains interchangeably. In particular, given some foliated $J$--holomorphic plane, we could define a foliated $J$--holomorphic cylinder by introducing a pucture in the domain. Therefore, we say that a foliated $J$--holomorphic map
\[ F=(a,u): \mathbb{C} \setminus \{0\} \to \R \times M \]
can be \textbf{extended} over zero (or $\infty$) if there is a foliated $J$--holomorphic map with domain $\mathbb{C}$ (resp. the puctured Riemann sphere $\hat{\mathbb{C}} \setminus \{0\})$ that agrees with $F$ in $\mathbb{C} \setminus \{0\}$.

\subsubsection{Energy}

After introducing the \textsl{trivial} foliated $J$--holomorphic curves, we would like to introduce an \textsl{energy constraint} that singles out more interesting solutions of Equation \ref{eq:holCurves}. This leads us to the following definitions. 

\begin{definition}
Consider the space of functions 
\[ \Gamma = \{ \phi \in C^\infty(\mathbb{R}, [0,1])| \quad \phi' \geq 0 \} \]
Let $F: S \to \R \times M$ be a foliated $J$--holomorphic curve.

Its \textbf{energy} is defined by:
\begin{align} \label{eq:energy}
E(F) = \sup_{\phi \in \Gamma} \int_S F^* d(\phi\alpha).
\end{align}

Its \textbf{horizontal energy} is defined by:
\begin{align} \label{eq:horizontalEnergy}
E^h(F) = \int_S F^* d\alpha.
\end{align}
\end{definition}

Trivial solutions correspond to the following general phenomenon.
\begin{lemma} \label{lem:zeroHorizontalEnergy}
Let $F=(a,u): (S,i) \to (\R \times M,J)$ be a foliated $J$--holomorphic curve. $E^h(F) = 0$ if and only if $\im(F) \subset \R \times \gamma$, where $\gamma$ is a Reeb orbit. 
\end{lemma}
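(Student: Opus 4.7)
The claim breaks into two implications. The reverse direction is essentially a computation: if $F(S) \subset \R \times \im(\gamma)$ for a Reeb trajectory $\gamma$, then $dF$ takes values in $\R\partial_t \oplus \R R$. Since $d\alpha$ is pulled back from $M$ (so $i_{\partial_t} d\alpha = 0$) and $i_R d\alpha$ vanishes on $T\SF$ by definition of the Reeb field, $F^*d\alpha$ vanishes identically and hence $E^h(F) = 0$.

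For the forward direction the strategy is to establish that $F^*d\alpha$ is pointwise non--negative, with vanishing locus exactly the zeros of $\pi_\xi \circ du$. Fix $z \in S$ and a unit tangent vector $e_1 \in T_z S$, and set $e_2 = i e_1$. Decompose
\[ dF(e_k) = da(e_k)\,\partial_t + \alpha(du(e_k))\,R + \pi_\xi du(e_k), \qquad k=1,2. \]
The $J$--holomorphic equation $J\,dF(e_1) = dF(e_2)$, combined with $J\partial_t = R$, $JR = -\partial_t$ and $J|_\xi = J_\xi$, yields the component--wise identities
\[ da(e_2) = -\alpha(du(e_1)), \qquad \alpha(du(e_2)) = da(e_1), \qquad \pi_\xi du(e_2) = J_\xi \pi_\xi du(e_1). \]
Since $d\alpha(\partial_t,\cdot) = 0$ and $d\alpha(R,\cdot)|_{T\SF} = 0$,
\[ F^*d\alpha(e_1,e_2) \;=\; d\alpha\bigl(\pi_\xi du(e_1),\; J_\xi \pi_\xi du(e_1)\bigr) \;=\; \bigl|\pi_\xi du(e_1)\bigr|_g^{2} \;\geq\; 0, \]
using compatibility of $J_\xi$ with $d\alpha$. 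Hence $E^h(F) = 0$ forces $\pi_\xi \circ du \equiv 0$.

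Once $\pi_\xi \circ du$ vanishes identically, $du$ is valued in $\R R \subset T\SF$, so $u$ is tangent to the Reeb direction inside the leaf $\SL$ that contains $\im(u)$. Fixing $z_0 \in S$ and letting $\gamma:\R \to \SL$ be the maximal Reeb trajectory with $\gamma(0) = u(z_0)$, any smooth path $c:[0,1]\to S$ starting at $z_0$ satisfies $(u\circ c)'(t) = (u^*\alpha)\bigl(c'(t)\bigr)\, R(u(c(t)))$, so $u\circ c$ is a reparametrisation of a piece of $\gamma$. Connectedness of $S$ (which may be assumed by working component by component) then gives $u(S)\subset \im(\gamma)$, so $F(S) \subset \R\times\im(\gamma)$.

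The only delicate step is the pointwise identity $F^*d\alpha(e_1,e_2) = |\pi_\xi du(e_1)|_g^{2}$; once it is in place both directions follow formally. A minor subtlety in the last paragraph is that if $\gamma$ is a closed orbit then $\im(\gamma)$ is not an embedded submanifold of $\SL$, but working with the parametrised curve rather than its image sidesteps this.
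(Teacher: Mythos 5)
Your argument is correct and is essentially the paper's own proof, carried out more carefully: both reduce the statement to the pointwise identity $F^*d\alpha(e_1,ie_1) = |\pi_\xi du(e_1)|^2 \geq 0$ and the fact that $d\alpha$ kills the $\partial_t$ and $R$ directions. The paper's computation is terser (and contains a couple of typographical slips that your explicit component-wise decomposition avoids), and it stops at "the claim follows" where you spell out the ODE argument showing $u$ is a reparametrisation of a single Reeb trajectory, but there is no difference in strategy.
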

\begin{proof}
Given a ball $U \subset S$ find complex coordinates $(s,t)$. Then:
\[ \int_U F^* d\alpha = \int_U d\alpha(u_s, u_t) ds \wedge dt = \int_U d\alpha(u_s, J u_s) ds \wedge dt = \]
\[ \int_U d\alpha(\pi_\xi u_s, \pi_\xi \circ J u_t) ds \wedge dt = \int_U |\pi_\xi u_s| ds \wedge dt \]
and since
\[ E^h(F) = \int_S F^* d\alpha = \int_S u^* d\alpha \]
the claim follows.
\end{proof}

The following lemma states that cylinders with finite energy that cannot be extended to planes have to be necessarily trivial and hence imply the existence of a Reeb orbit.

\begin{lemma} \label{lem:trivialCylinder}
Let $F$ be a foliated $J$--holomorphic map
\[ F=(a,u): \mathbb{C} \setminus \{0\} \to \R \times M \]
satisfying $E(F) < \infty$ and $E^h(F) = 0$. If $F$ cannot be extended over its punctures, then $t \to u(e^{2\pi it})$, $t \in [0,1]$, is a parametrised closed Reeb orbit.
\end{lemma}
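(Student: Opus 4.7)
The plan is to reduce to a holomorphic model on the cylinder, use finite energy to rigidify it, and then read off the Reeb orbit.

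\textbf{Reduction to a holomorphic function.} By the proof of Lemma \ref{lem:zeroHorizontalEnergy}, $E^h(F) = 0$ gives $\pi_\xi u_s \equiv 0$ pointwise, so both $u_s$ and $u_\tau$ are pointwise multiples of $R$ and $u$ lies in a single integral curve $\gamma$ of $R$. Parametrise $\gamma: \R \to M$ by $\gamma' = R \circ \gamma$ and lift to the universal cover of $\mathbb{C} \setminus \{0\}$ via $z = e^{2\pi w}$, $w = s + i\tau$, writing $u = \gamma \circ \phi$ for a real function $\phi$. Unpacking $dF \circ i = J \circ dF$ in the splitting $T(\R \times \SL) = \R\partial_t \oplus \R R \oplus \xi$ yields the classical Cauchy--Riemann equations $a_s = \phi_\tau$, $a_\tau = -\phi_s$, so $h(w) := a(w) + i\phi(w)$ is holomorphic.

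\textbf{Structure of $h$.} Since $u$ descends to $\mathbb{C}\setminus\{0\}$, the monodromy $P := \phi(s,\tau+1) - \phi(s,\tau)$ is a period of $\gamma$ and is constant by holomorphicity. Hence
\[ h(w) = Pw + \tilde h(w), \qquad \tilde h(w+i) = \tilde h(w), \]
so $\tilde h$ descends to a holomorphic function $\hat h : \mathbb{C} \setminus \{0\} \to \mathbb{C}$ with Laurent expansion $\hat h(z) = \sum_{n \in \mathbb{Z}} c_n z^n$.

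\textbf{Finite energy forces $\hat h$ constant.} From $F^*\alpha = d\phi$ and $F^*d\alpha = 0$ one obtains $F^*d(\psi\alpha) = d(\psi(a)\,d\phi)$, so Stokes on the annulus $A_{s_1,s_2} = \{s_1 \le s \le s_2\}$ gives
\[ \int_{A_{s_1,s_2}} F^* d(\psi\alpha) = B_\psi(s_2) - B_\psi(s_1), \qquad B_\psi(s) := \int_0^1 \psi(a(s,\tau))\,\phi_\tau(s,\tau)\,d\tau. \]
If some $c_n \neq 0$ with $n \neq 0$, I pick $n$ of maximal modulus and let $s \to -\mathrm{sgn}(n)\cdot\infty$; then the term $e^{2\pi n s}\Re(c_n e^{2\pi i n\tau})$ dominates $Ps$ and all other Laurent contributions, so the oscillation of $a(s,\cdot)$ in $\tau$ grows exponentially. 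Because holomorphicity identifies $\phi_\tau$ with $a_s$, the integrand $\psi(a)\phi_\tau$ does not cancel across this oscillation: choosing $\psi \in \Gamma$ to be a smoothed step whose jump sits inside the oscillation window selects the $\tau$-arc on which $a_s$ has a fixed sign, yielding $|B_\psi(s)| \asymp e^{2\pi|n||s|}$. This contradicts $E(F) < \infty$, so $\hat h$ is bounded on $\mathbb{C}\setminus\{0\}$ and Liouville forces $\hat h \equiv c_0$.

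\textbf{Conclusion.} With $\hat h \equiv c_0$ we have $a(s,\tau) = Ps + \Re c_0$ and $\phi(s,\tau) = P\tau + \Im c_0$. If $P = 0$ then $F$ is constant and extends trivially over both punctures, contradicting the hypothesis; hence $P \neq 0$, so $P$ is a nonzero period of $\gamma$, and $t \mapsto u(e^{2\pi it}) = \gamma(Pt + \Im c_0)$ is a parametrised closed Reeb orbit. The delicate step is the oscillation estimate: for each offending Laurent coefficient one must produce an explicit $\psi \in \Gamma$ whose boundary integral diverges, exploiting the Cauchy--Riemann identity $\phi_\tau = a_s$ to prevent cancellation.
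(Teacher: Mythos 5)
Your reduction is sound and follows the same first step as the paper: vanishing horizontal energy confines $u$ to a single Reeb trajectory, and the Cauchy--Riemann equations turn $F$ into a holomorphic map into the flat cylinder over that trajectory. Where you diverge is the finiteness argument. The paper first looks at $F$ as a map into the universal cover $\mathbb{C}$, shows (via branched covering near a pole, and Picard's great theorem near an essential singularity, each combined with the energy bound) that $\gamma$ must be closed and $F$ non--contractible, and then, passing to $h : \mathbb{C}\setminus\{0\} \to \mathbb{C}\setminus\{0\}$, rules out essential singularities again by Picard and concludes from the classification of rational functions that $h(z) = az^k$. You instead lift to the universal cover, extract the linear monodromy $Pw$, descend the periodic part $\tilde h$ to $\hat h$ on $\mathbb{C}\setminus\{0\}$, and try to kill the whole Laurent tail with a single oscillation estimate, falling back on Liouville.

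The gap is precisely at the step ``pick $n$ of maximal modulus.'' If $\hat h$ has an essential singularity at $0$ or at $\infty$ --- which is a priori possible and is exactly what Picard is invoked for in the paper --- then infinitely many Laurent coefficients $c_n$ are nonzero and there is no maximal $|n|$, so the dominant--term analysis never starts. Your oscillation estimate (which, with the correct sign $s \to \mathrm{sgn}(n)\cdot\infty$, does give $B_\psi(s)$ growing like $e^{2\pi |n| s}$ for a \emph{fixed} step function $\psi$, hence $\int F^*d(\psi\alpha) = \infty$) only handles the case where $\hat h$ has at worst poles, i.e.\ is rational. You would need to add an argument --- Picard's great theorem, or at least a quantitative substitute showing that an essential singularity forces the image of $h$ on a fundamental domain $\R\times[0,1)$ to sweep a vertical band with multiplicity, so that $\int_{\mathbb{C}}\psi'(x)\,n(x,y)\,dx\,dy=\infty$ via the area formula --- to close this case. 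Two smaller points: the quantity you should control to contradict $E(F)<\infty$ is $\int_{\mathbb{C}\setminus\{0\}} F^*d(\psi\alpha)$ for a fixed $\psi$, not $\sup_s |B_\psi(s)|$ with $\psi$ depending on $s$; fixing $\psi$ to be a step at some $s_0$ and sending $s\to+\infty$ in $B_\psi(s)$ does the job once you note $F^*d(\psi\alpha)\ge 0$ pointwise. And the factor in $z=e^{2\pi w}$ gives deck transformation $w\mapsto w+i$, consistent with your $i$--periodicity of $\tilde h$, so that part is fine.
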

\begin{proof}
By Lemma \ref{lem:zeroHorizontalEnergy}, we know that there is some Reeb orbit $\gamma$ (not necessarily closed) such that $\im(F) \subset \R \times \gamma$. We can identify the universal cover of $\R \times \gamma$ with $\mathbb{C}$ with its standard complex structure.

We claim that $\gamma$ is a closed orbit and that $F$ is a non contractible map into $\R \times \gamma$. Assuming otherwise, regard $F$ as a holomorphic map $f: \mathbb{C} \setminus \{0\} \to \mathbb{C} \subset \hat{\mathbb{C}}$. As such, its punctures are either removable or essential singularities. They cannot be removable singularities with values in $\mathbb{C}$ by assumption. 

If $f$ has a removable singularity that is a pole, a neighbourhood of the pucture branch covers a neighbourhood of $\infty$ in the Riemann sphere. In particular, there is a band $[a,b] \times \R \subset \im(f) \subset \mathbb{C}$, with $a < b$ large enough. This contradicts the assumption that $E(F)$ was finite. 

If $f$ has an essential singularity, then Picard's great theorem states that every point in $\mathbb{C}$, except possibly one, is contained in $\im(f)$. Again, this contradicts the assumption that $E(F)$ was finite. 

We deduce that $\gamma$ is a closed orbit and that $F$ is a non--contractible map into the cylinder $\R \times \gamma$. The exponential is a biholomorphism between the cylinder and $\mathbb{C} \setminus \{0\}$, so now we regard $F$ as a holomorphic map $h: \mathbb{C} \setminus \{0\} \to \mathbb{C} \setminus \{0\}$. 

Suppose one of the punctures was an essential singularity for $h$. Since $h$ has no zeroes or poles, Picard's theorem states that all other points in the Riemann sphere have infinitely many preimages by $h$. This contradicts $E(F) < \infty$. 

Therefore, $h$ can be extended over its punctures to be zero or $\infty$. $h$ is then a meromorphic function over the Riemann sphere, and hence it is nothing but the quotient of two polynomials. By our assumption that there are no other zeroes or poles this implies that $h(z) = az^k$, for some $k \in \mathbb{Z} \setminus \{0\}$, $a \in \mathbb{C}$. This shows that $t \to u(e^{2\pi it})$ parametrises the $k$--fold cover of $\gamma$.
\end{proof}

Exactly the same analysis yields the following lemma.

\begin{lemma} \label{lem:trivialPlane}
Let $F$ be a foliated $J$--holomorphic map
\[ F=(a,u): \mathbb{C} \to \R \times M \]
satisfying $E^h(F) = 0$. Then either $F$ is the constant map or $E(F) = \infty$.
\end{lemma}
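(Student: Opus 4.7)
The plan, following the template of Lemma~\ref{lem:trivialCylinder}, is to use Lemma~\ref{lem:zeroHorizontalEnergy} to trap the image of $F$ inside a single Reeb trajectory, and then reduce the energy computation to a classical statement about non--constant entire functions.

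Concretely, $E^h(F) = 0$ immediately gives $\im(F) \subset \R \times \gamma$ for some Reeb orbit $\gamma$. Choose coordinates $(a,s)$ on $\R \times \gamma$ in which $s$ parametrises $\gamma$ via the Reeb flow, so that $\alpha|_{\R \times \gamma} = ds$ and $d\alpha|_{\R \times \gamma} = 0$. By construction $J$ sends $\partial_a$ to $R = \partial_s$, which is precisely the standard complex structure on $\R^2$. Passing to the universal cover of $\R \times \gamma$---which is biholomorphic to $(\mathbb{C},i)$ whether $\gamma$ is closed or not---the map $F$ lifts to a bona fide holomorphic map $\tilde F : \mathbb{C} \to \mathbb{C}$. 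Assume from now on that $F$, equivalently $\tilde F$, is non--constant.

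For any $\phi \in \Gamma$ the pullback reads $F^* d(\phi\alpha) = \tilde F^*(\phi'(x)\, dx \wedge dy)$, and the area formula for holomorphic maps gives
\[ \int_\mathbb{C} F^* d(\phi\alpha) \;=\; \int_\mathbb{C} \phi'(\mathrm{Re}(w))\, n_{\tilde F}(w)\, dx\,dy, \]
where $n_{\tilde F}(w) := \#\tilde F^{-1}(w)$. By Picard's little theorem, $\tilde F(\mathbb{C})$ is all of $\mathbb{C}$ with at most one point removed, so $n_{\tilde F}(w) \geq 1$ off a set of measure zero. Selecting any $\phi \in \Gamma$ with $\phi' > 0$ on some open interval $I \subset \R$ then forces
\[ \int_\mathbb{C} F^* d(\phi\alpha) \;\geq\; \int_{I \times \R} \phi'(x)\, dx\,dy \;=\; +\infty, \]
and hence $E(F) = +\infty$.

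I do not anticipate any serious obstacle; the only delicate point is bookkeeping the universal cover identification so that $\alpha$ really lifts to $dy$ and $J$ matches the standard complex structure on $\mathbb{C}$, but this is immediate from $\alpha(R) = 1$ and $J\partial_t = R$. The conceptual content, exactly as in Lemma~\ref{lem:trivialCylinder}, is that a $J$--holomorphic curve confined to $\R \times \gamma$ is really an entire function whose image is essentially all of $\mathbb{C}$, so every vertical strip $\{a \leq \mathrm{Re}(w) \leq b\}$ in the target is swept with infinite area.
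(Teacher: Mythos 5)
Your proof is correct, and it takes a genuinely different route from the one in the paper once the common reduction (Lemma~\ref{lem:zeroHorizontalEnergy}, confinement to $\R \times \gamma$, lift to an entire map $\tilde F: \mathbb{C} \to \mathbb{C}$) is in place. The paper then classifies the behaviour of $\tilde F$ at $\infty$ into three cases: a removable singularity with finite value forces $\tilde F$ to be constant by Liouville; a pole forces the image to contain a vertical band by the open-mapping/branched-covering description near $\infty$; an essential singularity forces the image to miss at most one point by Picard's great theorem -- and in the last two cases the band is then fed into the energy integral to force $E(F) = \infty$. You collapse this trichotomy into a single computation: the area formula for the entire map, combined with Picard's little theorem to guarantee $n_{\tilde F} \geq 1$ off a null set, shows directly that $\int_{\mathbb{C}} F^* d(\phi\alpha) = \int_{\mathbb{C}} \phi'(\mathrm{Re}\,w)\, n_{\tilde F}(w)\, dA \geq \int_{\R^2}\phi'(x)\,dx\,dy = \infty$ for any $\phi \in \Gamma$ that is not locally constant. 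This buys you a case-free argument and replaces Picard's great theorem with the weaker little theorem (in fact even Picard is overkill: applying Liouville to $e^{\pm\tilde F}$ already shows the image of a non-constant entire function meets every vertical strip). The only minor bookkeeping point is that your coordinate ``$a$'' on the $\R$-factor of $\R \times \gamma$ clashes with the notation $F = (a,u)$ in the statement, and that the pullback form is $\tilde F^*\bigl(\phi'(\mathrm{Re}\,w)\,dx \wedge dy\bigr)$ with $\phi'$ evaluated on the \emph{target} coordinate -- both of which you clearly intend. The identification of $(\R \times \gamma, J)$ with $(\mathbb{C}, i)$ via $\alpha(R)=1$ and $J\partial_t = R$ is exactly right.
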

\begin{proof}
Let $\gamma$ be the Reeb orbit such that $\im(F) \subset \R \times \gamma$. By taking the universal cover of $\R \times \gamma$, regard $F$ as a map $\mathbb{C} \to \mathbb{C}$, as in Lemma \ref{lem:trivialCylinder}. Now study the extension problem of $F$ to $\infty$. If it corresponds to a removable singularity with values in $\mathbb{C}$, then $F$ is the constant map. Otherwise, if it is either a pole or a non--removable singularity, it has infinite energy.
\end{proof}

\subsubsection{Riemannian and symplectic area}

In the case of compact symplectic manifolds, there is an interplay between the symplectic area of a $J$--holomorphic curve and its riemannian area for the metric given by the symplectic form and the compatible almost complex structure. 

In our case, $g$ is not of that form. Rather, it is $\mathbb{R}$--invariant, while $\omega$ is not: $\mathbb{R}$--translations of the same $J$--holomorphic curve have different symplectic energy and indeed there are no universal constants relating the $\omega$--area and the $g$--area. 

However, $E$ and $E^h$ are invariant under the $\mathbb{R}$-action. Given $F$, a foliated $J$--holomorphic curve, let $\area_g(F)$ be its riemannian area in terms of $g$, and let $\area_{\omega_\phi}(F)$ be its symplectic area in terms of $\omega_\phi = d(\phi\alpha)$.

\begin{lemma} \label{lem:noSphereBubbling}
Let $F=(a,u): (S,i) \to (\R \times M,J)$ be a parametrised foliated $J$--holomorphic curve. Then, if $a$ is bounded below and above:
\[ \area_g(F) < C\area_\omega(F) < C'\int_{\partial S} \alpha, \]
for some constants $C, C'$ depending only on the upper and lower bounds of $a$. 
\end{lemma}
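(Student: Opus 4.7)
The plan is to prove the two inequalities separately. The first, comparing the $g$--area to the $\omega$--area, reduces to a pointwise comparison of two metrics on $\R\times M$ that differ by the factor $e^t$. The second, bounding the $\omega$--area by a boundary integral of $\alpha$, is a direct application of Stokes' theorem using the primitive $e^t\alpha$ of $\omega$. In both steps, the assumed bounds $t_{\text{min}}\le a\le t_{\text{max}}$ on $F(S)$ enter through the factor $e^t$.

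For the first inequality, I would introduce the metric $\tilde g(X,Y):=\omega(X,JY)$ canonically associated to the compatible pair $(\omega,J)$. A direct computation, using the splitting $T(\R\times M)=\langle\partial_t\rangle\oplus\langle R\rangle\oplus\xi$ together with $J\partial_t=R$, $J|_\xi=J_\xi$, $\alpha(R)=1$ and $i_Rd\alpha|_{T\SF}=0$, shows that $\tilde g=e^t\,g$ as symmetric bilinear forms on $T(\R\times M)$. Since $F$ is $J$--holomorphic, one has the standard identity $\area_{\tilde g}(F)=\int_S F^*\omega=\area_\omega(F)$. Restricting to $F(S)$, where $a\ge t_{\text{min}}$, the pointwise inequality $\tilde g\ge e^{t_{\text{min}}}g$ implies (in two dimensions) $\area_{\tilde g}(F)\ge e^{t_{\text{min}}}\area_g(F)$, hence $\area_g(F)\le e^{-t_{\text{min}}}\area_\omega(F)$. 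This gives the first inequality with $C=e^{-t_{\text{min}}}$.

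For the second inequality, I would apply Stokes' theorem to the primitive $e^t\alpha$ of $\omega$ and then use the upper bound on $a$ along $\partial S$:
\[
\area_\omega(F)=\int_S F^*\omega=\int_S F^*d(e^t\alpha)=\int_{\partial S}e^a\,F^*\alpha\le e^{t_{\text{max}}}\int_{\partial S}F^*\alpha.
\]
Combining with the first step gives the claimed bound with $C'=e^{t_{\text{max}}-t_{\text{min}}}$, depending only on the upper and lower bounds for $a$ on the image of $F$.

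The step I expect to require the most care is the pointwise control in the last inequality: to pass from $e^a F^*\alpha$ to $e^{t_{\text{max}}}F^*\alpha$ one needs $F^*\alpha\ge 0$ along $\partial S$ in the orientation inherited from $S$. In the intended applications to the bubbling analysis of the next subsection, $\partial S$ will be a small loop around a potential bubbling point and, after rescaling, $F^*\alpha$ on such a loop converges to the action form of a Reeb orbit and is therefore non-negative; alternatively, $\int_{\partial S}\alpha$ can be interpreted as an action/length integral with absolute value. Once this orientation issue is disposed of and the identification $\tilde g=e^t g$ is in place, the rest is bookkeeping.
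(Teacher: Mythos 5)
Your proof is correct and follows essentially the same strategy as the paper's: compare $g$ with a metric of the form $d(\phi\alpha)(\cdot, J\cdot)$ built from a positive primitive $\phi\alpha$ of a leafwise symplectic form compatible with $J$, invoke the standard identity between Riemannian and symplectic area for $J$--holomorphic maps, and finish with Stokes. The only real difference is cosmetic: the paper takes $\phi \in \Gamma$ to be the affine function $\phi(t)=\tfrac{t-a_0}{3(a_1-a_0)}+\tfrac13$ on $[a_0,a_1]$ and then just quotes uniform equivalence of the two metrics, whereas you take $\phi=e^t$ and pin down the conformal factor $\tilde g = e^t g$ exactly; both work because $a$ is assumed bounded.

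Two remarks worth recording. First, with the formula for $g$ as printed, namely $g|_\xi = d\alpha(J_\xi\circ\pi_\xi,\pi_\xi)$, the identity $\tilde g = e^t g$ acquires a sign on the $\xi$--summand; this is almost certainly a typo in the paper's definition of $g$ (that ordering is not positive definite under the compatibility convention $d\alpha(\cdot,J_\xi\cdot)>0$), and your computation is the internally consistent one. Second, the orientation issue you flag in the last step is genuine and is also present, unacknowledged, in the paper's proof: one needs $F^*\alpha\ge 0$ along $\partial S$ for the final inequality as written. For a $J$--holomorphic map into a symplectization one has $u^*\alpha = -da\circ i$, so $F^*\alpha$ restricted to $\partial S$ is the outward normal derivative of $a$. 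In the Bishop--family application, where $a|_{\partial S}\equiv 0$ and $a\le 0$ in the interior by the maximum principle (cf.\ Lemma~\ref{lem:winding}), that normal derivative is nonnegative, so the inequality holds exactly where it is used.
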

\begin{proof}
Consider $a_0$ and $a_1$ satisfying $a_0 < a < a_1$. Let $\phi(t) = \frac{t-a_0}{3(a_1-a_0)} + 1/3$ in $[a_0,a_1]$ and belonging to $\Gamma$. Then $\omega_\phi$ is a symplectic form in $[a_0,a_1] \times M$ and $J$ is $\omega_\phi$--compatible. Since $0 < D < \phi, \phi' < D' < \infty$, there are universal constants relating the metrics $g$ and $g_\phi = \omega_\phi(-,J-)$ in $[a_0,a_1] \times M$.

Since $J$ is $\omega_\phi$--compatible, $F$ being $J$--holomorphic implies that $\area_{g_\phi}(F) = \area_{\omega_\phi}(F)$, and the first inequality follows. The second inequality follows by applying Stokes. 
\end{proof}

An immediate consequence of Lemma \ref{lem:noSphereBubbling} is that there cannot be \textsl{closed} foliated $J$--holomorphic curves in $\R \times M$.

\subsection{Bubbling}

As we shall see in Section \ref{sec:main}, the way in which we will prove the existence of a periodic orbit of the Reeb vector field will be by constructing a $1$--dimensional moduli of pseudoholomorphic discs that necessarily will be open in one of its ends. The following lemma shows that the reason for it to be open must be that the gradient is not uniformly bounded for all discs in the moduli. 

\begin{proposition} \label{prop:ArzelaAscoli}
Fix $\SL$ a leaf of $\SF$. Let $W \subset \R \times \SL$ be a totally real compact submanifold, possibly with boundary.

Let $(S,i)$ be a compact Riemann surface with boundary. Consider the sequence of foliated $J$--holomorphic maps
\[ F_k: (S, \partial S) \to (\R \times \SL, W), \quad k \in \mathbb{N}. \]

Suppose that there is a uniform bound $||dF_k|| <  C < \infty$. Then there is a subsequence $F_{k_i}$, $k_i \to \infty$, convergent in the $C^\infty$--topology to a foliated $J$--holomorphic map
\[ F_\infty: (S, \partial S) \to (\R \times M, W) \]
\end{proposition}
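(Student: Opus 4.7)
The plan is to combine the classical Arzel\`a--Ascoli theorem with elliptic regularity for the nonlinear $\bar\partial_J$ operator. As the introduction to the section already hints, the only genuinely foliated feature of the argument is that, although individual leaves of $\SF_\R$ may be open, the ambient manifold $\R\times M$ has a compact $M$-factor, so the standard compactness tools apply once the $\R$-coordinate is controlled.

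The first step is to prove uniform $C^0$-boundedness. Since $S$ is compact and connected it has finite diameter $d(S)$ in any fixed metric, and the gradient bound gives
\[ \dist_g(F_k(z), W) \le \dist_g(F_k(z), F_k(\partial S)) \le C\, d(S) \]
for every $z\in S$ and every $k$. With $W$ compact, this confines every $F_k(S)$ to a fixed compact subset $K\subset \R\times M$; in particular the $\R$-coordinate is uniformly bounded. The bound $\|dF_k\|<C$ makes the family $\{F_k\}$ equicontinuous, and so Arzel\`a--Ascoli produces a subsequence converging in $C^0(S,\R\times M)$ to a continuous limit $F_\infty:(S,\partial S)\to(\R\times M, W)$.

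Next I would upgrade to $C^\infty$-convergence by the standard bootstrapping scheme. Around a point of $F_\infty(S)$ pick a foliation-adapted chart on $\R\times M$ that splits as $\R\times U\times V$, where $\R\times U$ is a leaf chart and $V$ is a local transversal. Writing $F_k=(a_k,G_k,h_k)$, tangency to $\SF_\R$ forces $h_k$ to be locally constant, and the surviving pair $(a_k,G_k)$ satisfies a genuine nonlinear Cauchy--Riemann system. Interior elliptic estimates, applied exactly as in \cite{Ho}, upgrade the $C^0$-convergence to $C^{k,\alpha}$ bounds on compact subsets of the interior for every $k$, hence to $C^\infty$-convergence on interior compact subsets. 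The limit $F_\infty$ then satisfies $J\circ dF_\infty = dF_\infty\circ i$ and $dF_\infty(TS)\subset T\SF_\R$ pointwise, both being closed conditions in the $C^1$ topology, so $F_\infty(S)$ is tangent to $\SF_\R$ throughout and hence lies in a single leaf by connectedness of $S$.

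The main obstacle I anticipate is the regularity at $\partial S$. Here one needs a boundary Schauder estimate for the $\bar\partial_J$ equation with the boundary condition $F_k(\partial S)\subset W$; the totally real hypothesis on $W$ is precisely what is required to set up the usual doubling/Schwarz-reflection argument and extend the interior estimates up to $\partial S$. Once this boundary regularity is in place the subsequential $C^\infty$-convergence extends to all of $S$, the identities above continue to hold on the limit, and the proposition follows.
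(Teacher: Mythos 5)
Your proposal is correct and follows essentially the same route as the paper's (very terse) proof: use the gradient bound together with the boundary condition in the compact set $W$ to confine all images to a fixed compact subset of $\R\times\SL$, then invoke the standard elliptic bootstrapping and Arzel\`a--Ascoli to extract a $C^\infty$-convergent subsequence. You simply spell out the details -- the explicit $C^0$ estimate, the reduction to a genuine $\bar\partial_J$-system in a foliated chart, and the boundary regularity via the totally real condition -- that the paper dismisses as ``the standard case.''
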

\begin{proof}
Observe that since we have a uniform gradient bound and $F_k(\partial S) \subset W$, for all $k$, it necessarily follows that the images of all the $F_k$ lie in a compact subset of $\R \times \SL$. Then one can proceed as in the standard case to prove $C^\infty$ bounds from $C^1$ bounds and then apply Arzel\'a-Ascoli to conclude.
\end{proof}

\begin{remark}
The same statement holds for surfaces without boundary as long as one imposes for the images of all the $F_k$ to lie in a compact set of the leaf. 
\end{remark}

Proposition \ref{prop:ArzelaAscoli} suggests that we should study sequences of maps
\[ F_k: (S, \partial S) \to (\R \times \SL, W), \quad k \in \mathbb{N} \]
in which $||dF_k||$ is not uniformly bounded. We have to consider two separate cases.

\subsubsection{Plane bubbling}

\begin{proposition} \label{prop:planeBubbling}
Consider a sequence of foliated $J$--holomorphic curves
\[ F_k: (S, \partial S) \to (\R \times \SL, W), \quad k \in \mathbb{N} \]
and a corresponding sequence of points $q_k$ in $S$ having $M_k = ||d_{q_k}F_k|| \to \infty$ and converging to a point $q \in S$. 

Suppose that there is an uniform bound $E(F_k) < C < \infty$. If $\dist(q_k,\partial S)M_k \to \infty$, there is a foliated $J$--holomorphic plane
\[ F_\infty: \mathbb{C} \to \R \times \SL' \]
with $E(F_\infty) < C$, where $\SL'$ is a leaf in the closure of $\SL$. 
\end{proposition}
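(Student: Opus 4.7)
My plan is to adapt Hofer's classical rescaling/bubbling procedure. The only novelty in the foliated setting is that the limit curve may end up in a leaf different from $\SL$, since leaves are open and the rescaling can drive the image out of any compact subset of $\SL$.

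First I apply Hofer's topological lemma. Because $\dist(q_k,\partial S)\,M_k\to\infty$, I can pick points $q'_k\in S$ with $q'_k-q_k\to 0$, rescaled rates $M'_k:=\|d_{q'_k}F_k\|\ge M_k$, and radii $r_k$ with $r_k M'_k\to\infty$, such that $\|d_zF_k\|\le 2M'_k$ on $B(q'_k,r_k)\subset S\setminus\partial S$. Writing $t_k=a_k(q'_k)$ I define
\[ G_k : B(0,r_k M'_k)\subset\mathbb{C}\longrightarrow\R\times M,\qquad G_k(z)=T_{-t_k}\bigl(F_k(q'_k+z/M'_k)\bigr), \]
where $T_c$ denotes translation by $c$ in the $\R$-factor. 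Since $J$ is both $\R$-invariant and invariant under biholomorphic reparametrisations of the domain, each $G_k$ is again a foliated $J$--holomorphic map with image in $\R\times\SL$; by construction $\|d_0G_k\|=1$, $\|dG_k\|\le 2$ throughout, and $G_k(0)$ has $\R$-coordinate zero.

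Second, I set up uniform compactness in the target. The bound $\|dG_k\|\le 2$ forces $G_k(B(0,R))$ to sit inside the $g$-ball of radius $2R$ around $G_k(0)=(0,u_k(q'_k))$ for every fixed $R>0$. Since $M$ is compact, a subsequence has $u_k(q'_k)\to p\in M$, and then for each $R$ the images $G_k(B(0,R))$ eventually lie in a fixed compact subset of $\R\times M$ --- crucially, not necessarily inside a single leaf of $\SF_\R$. With uniform $C^0$ and $C^1$ bounds on compacta plus the Cauchy--Riemann equation, standard elliptic bootstrapping delivers uniform $C^\infty_\text{loc}$ bounds, and a diagonal Arzel\'a--Ascoli argument in the spirit of Proposition \ref{prop:ArzelaAscoli} (but inside compact subsets of the ambient $\R\times M$ rather than of a single leaf) extracts a subsequence converging in $C^\infty_\text{loc}$ to a smooth $J$-holomorphic map $F_\infty:\mathbb{C}\to\R\times M$ whose differential takes values in $\SF_\R$.

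Finally I identify the leaf and check the remaining claims. Since $F_\infty(\mathbb{C})$ is connected and tangent to $\SF_\R$, it is contained in a single leaf $\R\times\SL'$; being a $C^0$-limit of maps into $\R\times\SL$, the leaf $\SL'$ sits in $\overline{\SL}$. Non-triviality is immediate from $\|d_0F_\infty\|=1$. For the energy bound, the domain rescaling is a biholomorphism and the $\R$-translation of the target sends $\Gamma$ to itself (monotonicity of $\phi$ is translation-invariant), so $E(G_k)=E(F_k|_{B(q'_k,r_k)})\le E(F_k)<C$; lower semicontinuity of $E$ under $C^\infty_\text{loc}$-convergence then gives $E(F_\infty)\le C$. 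The main foliated-specific obstacle is that without the $\R$-centering the rescaled images could drift out to infinity in $\R\times\SL$ with no compactness to fall back on; it is precisely the compactness of the ambient $M$ that rescues Arzel\'a--Ascoli and, as a bonus, allows the limit to occupy a leaf $\SL'$ possibly distinct from $\SL$.
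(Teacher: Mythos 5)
Your argument reproduces the paper's proof almost line for line: Hofer's lemma to choose rescaling centres and radii, the vertical $\R$-translation to recover $C^0$ control, compactness of the ambient $M$ to apply Arzel\'a--Ascoli (allowing the limit to land in a possibly different leaf $\SL'\subset\overline{\SL}$), the normalisation $\|d_0F_\infty\|=1$ for non-triviality, and $\R$-invariance of the energy functional together with lower semicontinuity for the energy bound. The only (trivial) discrepancy is that your conclusion $E(F_\infty)\le C$ is the honest output of lower semicontinuity, whereas the paper states a strict inequality; this is immaterial for the applications.
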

\begin{proof}
After possibly modifying the $q_k$ slightly, there are charts
\[ \phi_k: \mathbb{D}^2(R_k) \to S \]
\[ \phi_k(z) = q_k + \dfrac{z}{M_k} \]
with $R_k < \dist(q_k,\partial S)M_k$, $R_k \to \infty$, $R_k/M_k \to 0$ and $||d(F_k \circ \phi_k)|| < 2$ -- this last condition is achieved by the so called Hofer's lemma, see \cite[Lemma 26]{Ho}. 

The maps $F_k \circ \phi_k$ have $C^1$ bounds by construction, but they have no $C^0$ bounds. By our construction of $J$, the vertical translation of a $J$--holomorphic map is still $J$--holomorphic and hence we can compose with a vertical translation $\tau_k$ guaranteeing that $\tau_k \circ F_k \circ \phi_k$ takes the point $0$ to the level $\{0\} \times \SL$. Then, for every compact subset $\Omega \subset \mathbb{C}$, the maps $\tau_k \circ F_k \circ \phi_k: \Omega \to \R \times M$ are equicontinuous and bounded -- note that this is where we use that $\SL$ lies inside the compact manifold $M$. 

Recall that having uniform $C^1$ bounds implies that we have uniform $C^\infty$ bounds. Hence, an application of Arzel\'a--Ascoli shows that a subsequence converges in $C^\infty_{loc}$ to a map $F_\infty: \mathbb{C} \to \R \times M$ that must be foliated and $J$--holomorphic, but not necessarily lying in $\R \times \SL$, but maybe in some new leaf $\R \times \SL'$. 

Note that the energy of the map $\tau_k \circ F_k \circ \phi_k$ is bounded above by that of $F_k$. Since we have uniform bounds for the energy of the $F_k$, we have uniform energy bounds for the maps $\tau_k \circ F_k \circ \phi_k$ and hence for their limit $F_\infty$. Note that $F_\infty$ is necessarily non constant, since $||d_0F_\infty|| = 1$ by construction. In particular, it has non--zero energy. 
\end{proof}

\begin{remark}
We say that the map $F_\infty$ as given in the proof is called a \textbf{plane bubble}. If the map $F_\infty$ could be extended over the pucture to a map with domain the Riemann sphere $\mathbb{S}^2$, this would yield a contradiction with Lemma \ref{lem:noSphereBubbling}.
\end{remark}

\subsubsection{Disc bubbling}

\begin{proposition} \label{prop:discBubbling}
Consider a sequence of foliated $J$--holomorphic curves
\[ F_k: (S, \partial S) \to (\R \times \SL, W), \quad k \in \mathbb{N} \]
and a corresponding sequence of points $q_k$ in $S$ having $M_k = ||d_{q_k}F_k|| \to \infty$ converging to a point $q \in S$. 

Suppose that there is an uniform bound $E(F_k) < C < \infty$. If $\dist(q_k,\partial S)M_k$ is uniformly bounded from above, there is a foliated $J$--holomorphic disc
\[ F_\infty: (\mathbb{D}^2, \partial \mathbb{D}^2) \to (\R \times \SL,W) \]
with $E(F_\infty) < C$.
\end{proposition}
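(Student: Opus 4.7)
The plan is to adapt the argument of Proposition \ref{prop:planeBubbling} to the case where the concentration points accumulate on $\partial S$. The essential change is that the rescaled domain will now be a half--plane carrying a totally real boundary condition, which will ultimately produce a disc after conformal reparametrisation and removal of singularities.

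After passing to a subsequence, assume $\dist(q_k,\partial S)\cdot M_k \to c \in [0,\infty)$. Since $M_k \to \infty$, we have $\dist(q_k,\partial S) \to 0$, so the $q_k$ accumulate at a boundary point $q \in \partial S$. Let $p_k \in \partial S$ be the nearest boundary points to $q_k$ and choose boundary charts
\[ \psi_k: (\mathbb{H}\cap \mathbb{D}^2, \partial\mathbb{H}\cap \mathbb{D}^2) \to (S,\partial S), \qquad \psi_k(0)=p_k, \]
where $\mathbb{H}$ denotes the closed upper half--plane. Define the rescaled charts
\[ \phi_k: \mathbb{H}\cap \mathbb{D}^2(R_k) \to S, \qquad \phi_k(z) = \psi_k(z/M_k), \]
with $R_k \to \infty$ and $R_k/M_k \to 0$. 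After a standard perturbation of the base points (the boundary version of Hofer's lemma \cite[Lemma 26]{Ho}) we may arrange $\|d(F_k\circ \phi_k)\| < 2$ on the entire domain.

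By construction, $F_k\circ\phi_k$ maps $\partial\mathbb{H}\cap \mathbb{D}^2(R_k)$ into the compact totally real submanifold $W$. Combined with the uniform $C^1$ bound, this forces the image of $F_k\circ\phi_k$ on every fixed compact subset of $\mathbb{H}$ to lie in a compact region of $\R\times M$, with no need to perform the $\mathbb{R}$--translation used in the interior case (such a translation would move $W$). Uniform $C^1$ bounds promote to $C^\infty_{loc}$ bounds by the standard elliptic theory for foliated $J$--holomorphic curves, and an application of Arzel\'a-Ascoli -- using that $\SL$ is contained in the compact ambient manifold $M$ -- produces a $C^\infty_{loc}$--convergent subsequence whose limit
\[ \tilde F_\infty: \mathbb{H} \to \R \times M, \qquad \tilde F_\infty(\partial\mathbb{H}) \subset W, \]
is foliated and $J$--holomorphic. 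The preimage of $q_k$ under $\phi_k$ sits at height $\dist(q_k,\partial S)\cdot M_k \to c$, so $\|d_{ic}\tilde F_\infty\| = 1$ and $\tilde F_\infty$ is non--constant. Lower semicontinuity of the energy under $C^\infty_{loc}$ convergence gives $E(\tilde F_\infty) \leq C$. Because $\tilde F_\infty$ is foliated and connected and has boundary in $W \subset \R\times \SL$, the unique leaf containing its image must be $\R\times\SL$ itself.

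Finally, compose $\tilde F_\infty$ with a biholomorphism $\mathbb{H} \cong \mathbb{D}^2 \setminus \{1\}$ sending $\partial\mathbb{H}$ to $\partial\mathbb{D}^2 \setminus \{1\}$. The resulting foliated $J$--holomorphic map has finite energy, boundary on $W$, and a single boundary puncture. Invoking the boundary removal of singularities provided by Theorem \ref{thm:removalSingularities} extends it to the desired foliated $J$--holomorphic disc
\[ F_\infty: (\mathbb{D}^2,\partial\mathbb{D}^2) \to (\R\times\SL, W), \qquad E(F_\infty) \leq C. \]
The principal obstacle is securing the boundary form of Hofer's lemma together with the equicontinuity input in the absence of a vertical translation; once that is in place, leafwise confinement of the limit and the removal of the boundary puncture are the remaining technical ingredients, both of which reduce to statements already available.
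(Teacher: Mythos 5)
Your proposal is correct and follows essentially the same route as the paper's (very terse) proof: rescale near the accumulation point on $\partial S$, use the compactness of $W$ to obtain $C^0$--bounds on the rescaled maps without performing any $\mathbb{R}$--translation, pass to a $C^\infty_{loc}$ limit on the closed half--plane, observe that the limit is non--constant and lies in $\R \times \SL$ by the boundary condition, and finally remove the single boundary puncture of $\mathbb{D}^2 \setminus \{1\} \cong \mathbb{H}$.

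One minor misattribution: you invoke Theorem~\ref{thm:removalSingularities} to remove the boundary puncture, but that theorem as stated handles an \emph{interior} puncture of a map $\mathbb{D}^2\setminus\{0\} \to \R\times\SL$, and its alternative conclusion (asymptotic convergence to a Reeb orbit) is precisely what the interior case allows. The boundary puncture with totally real boundary on the compact $W$ instead needs the standard boundary removal of singularities for totally real boundary conditions (as in \cite{Ho}), for which the finite energy together with $\tilde F_\infty(\partial\mathbb{H})\subset W$ forces continuous extension across the puncture. This is what the paper implicitly means by ``the standard removal of singularities''; it is a citation issue rather than a gap in the argument, and your gradient bound $\|d\tilde F_\infty\|\le 2$ on $\mathbb{H}$ makes it especially easy. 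Also note the statement claims $E(F_\infty) < C$; you get $E(F_\infty)\le C$ by lower semicontinuity, and the strict inequality follows since $E(F_k)<C$ together with the fact that the bubble captures at most the energy lost in the limit.
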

\begin{proof}
Since we are assuming that $W$ is compact, the usual rescaling argument for the disc bubbling goes through and yields a punctured disc bubble lying in $\R \times \SL$ and having bounded gradient. Then, the standard removal of singularities gives a disc bubble $F_\infty$.
\end{proof}

\subsection{Removal of singularities}

The aim of this subsection is to prove the following result, which is one of the key ingredients for proving Theorem \ref{thm:main}.

\begin{theorem}[Removal of singularities] \label{thm:removalSingularities}
Let $F=(a,u): \mathbb{D}^2 \setminus \{0\} \to \R \times \SL \subset \R \times M$ be a $J$--holomorphic curve with $0 < E(F) < \infty$, $\SL$ a leaf of $\SF$. 

Then, either $F$ extends to a $J$--holomorphic map over $\mathbb{D}^2$ or for every sequence of radii $r_k \to 0$ the curves $\gamma_{r_k}(s) = u(e^{r_k+is})$ converge in $C^\infty$ --possibly after taking a subsequence-- to a parametrised closed Reeb orbit lying in the closure of $\SL$. 
\end{theorem}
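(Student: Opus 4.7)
The proof follows Hofer's original argument from \cite{Ho} almost verbatim; the only novelty is that the leaf $\SL$ carrying $F$ may be non-compact, but since the ambient $M$ is compact the Arzel\'a--Ascoli-type compactness arguments still apply, as formalised in Proposition \ref{prop:ArzelaAscoli}, at the possible cost of the limiting objects lying in a different leaf of the closure $\overline{\SL}$.

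The first step is to switch to the biholomorphic model $[0,\infty) \times \mathbb{S}^1 \to \mathbb{D}^2 \setminus \{0\}$, $(s,t) \mapsto e^{-s-2\pi it}$, on which $F$ becomes a foliated $J$-holomorphic map $\tilde F = (\tilde a, \tilde u)$ of finite energy. On this half-cylinder I would define the action function
\[ A(s) = \int_{\{s\} \times \mathbb{S}^1} \tilde u^*\alpha. \]
A Stokes computation, together with the pointwise non-negativity of $\tilde F^*d\alpha$ for a $J$-holomorphic map (cf. Lemma \ref{lem:zeroHorizontalEnergy}), yields
\[ A(s_0) - A(s_1) = \int_{[s_0, s_1] \times \mathbb{S}^1} \tilde F^* d\alpha \geq 0 \quad \text{for } s_0 \leq s_1. \]
Hence $A$ is monotone non-increasing, and since $E^h(\tilde F) \leq E(F) < \infty$, the limit $T := \lim_{s \to \infty} A(s) \in [0, A(0)]$ exists and the tail horizontal energy $\int_{[s,\infty) \times \mathbb{S}^1} \tilde F^* d\alpha$ tends to $0$ as $s \to \infty$.

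The main technical step is the uniform gradient bound $\sup |d\tilde F|_g < \infty$. I would argue by contradiction: if there is a sequence $z_k = (s_k, t_k)$ with $s_k \to \infty$ and $M_k = |d\tilde F(z_k)| \to \infty$, then Proposition \ref{prop:planeBubbling}, applied after vertical translation, extracts a non-constant foliated $J$-holomorphic plane $F_\infty: \mathbb{C} \to \R \times \SL'$ with $\SL' \subset \overline{\SL}$ and $0 < E(F_\infty) \leq E(F) < \infty$. By Lemma \ref{lem:trivialPlane}, the plane must satisfy $E^h(F_\infty) > 0$. A fixed compact $K \subset \mathbb{C}$ capturing most of this horizontal energy pulls back under the rescaling $\phi_k(z) = z_k + z/M_k$ to a region shrinking onto $z_k$, so for $k$ large
\[ \int_{[s_k - 1, \infty) \times \mathbb{S}^1} \tilde F^* d\alpha \geq \tfrac{1}{2} E^h(F_\infty) > 0, \]
contradicting the convergence to $0$ of the tail horizontal energy.

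With the gradient bound in hand the conclusion splits into two cases. If $T = 0$, the horizontal energy concentrates on a bounded piece; combined with a standard argument that keeps $\tilde a$ bounded near the puncture (otherwise Lemma \ref{lem:noSphereBubbling} would force infinite $g$-area on a cylinder bubble), one may apply the classical removal of singularities for $J$-holomorphic maps into a compact piece of $\R \times \SL$ to extend $F$ smoothly across $0$. If $T > 0$, the gradient bound and Proposition \ref{prop:ArzelaAscoli} give, for every sequence $s_k \to \infty$, a $C^\infty$-convergent subsequence of the loops $\gamma_{s_k}(\tau) = \tilde u(s_k, \tau)$ with limit $\gamma_\infty : \mathbb{S}^1 \to \overline{\SL}$. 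The vanishing of the tail horizontal energy forces $\pi_\xi \tilde u_t \to 0$ along these circles, so $\gamma_\infty'(\tau) \in \R \cdot R$; the action identity $\int_{\mathbb{S}^1} \gamma_\infty^*\alpha = T > 0$ then ensures that $\gamma_\infty$ is a non-trivial closed Reeb orbit traversed at constant speed $T$. The hard point of this plan is the gradient bound: it must ensure that the bubble, which a priori lives in $\overline{\SL}$ rather than in $\SL$, still consumes a positive amount of horizontal energy from the tail of $\tilde F$, and this is precisely where the compactness of $M$ via Proposition \ref{prop:ArzelaAscoli} is essential.
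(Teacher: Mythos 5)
Your proof is correct and follows essentially the same strategy as the paper: pass to cylinder coordinates, establish the gradient bound by ruling out plane bubbling through Lemma \ref{lem:trivialPlane} (the tail horizontal energy vanishes, so a non-constant finite-energy bubble is impossible), and then use Proposition \ref{prop:ArzelaAscoli} to extract a limit. The only cosmetic difference is at the end: the paper packages the loops into a full cylinder limit $F_\infty$ and invokes Lemma \ref{lem:trivialCylinder} to conclude it is a closed Reeb orbit, whereas you argue more directly on the circles via the pointwise vanishing of $\pi_\xi \tilde u_t$ and the action $T > 0$ -- both lead to the same conclusion and use the same ingredients.
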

\begin{proof}[Proof of Theorem \ref{thm:removalSingularities}]
Let us state the problem in terms of cylinders. Identify $\mathbb{D}^2 \setminus \{0\}$ with $[0, \infty) \times \mathbb{S}^1$ by using the biholomorphism $-\log(z)$, and regard $F$ as a foliated $J$--holomorphic map $[0, \infty) \times \mathbb{S}^1 \to \R \times M$. Then, the following maps are foliated $J$--holomorphic:
\[ F_k=(a_k,u_k): [-R_k/2, \infty) \times \mathbb{S}^1 \to \R \times M \]
\[ F_k(s,t) = (a(s+R_k,t) - a(R_k,0),u(s+R_k,t)) \]
and by assumption they have a uniform bound $E(F_k) < C < \infty$ and $\lim_{k \to \infty} E^h(F_k) = 0$. Here $R_k = -\log(r_k) \to \infty$.

Suppose that the gradient was not uniformly bounded for the family $F_k$. We can then find a sequence of points $q_k \in [0, \infty) \times \mathbb{S}^1$ escaping to infinity and satisfying $|d_{q_k}F| \to \infty$. Then we are under the assumptions of Proposition \ref{prop:planeBubbling}, and this yields a plane bubble $G: \mathbb{C} \to \R \times M$ with $E^h(G) = 0$, which must lie on top of a Reeb orbit by Lemma \ref{lem:zeroHorizontalEnergy}. By our bubbling analysis, it cannot be constant, since its gradient at the origin is $1$, which is a contradiction with it having $E(G) < \infty$, by Lemma \ref{lem:trivialPlane}.

We conclude that the family $F_k$ has uniform $C^1$ bounds and hence uniform $C^\infty$ bounds. By construction $a_k(0,0) \in \{0\} \times M$, which means that we have uniform $C^0$ bounds on every compact subset of $(-\infty, \infty) \times \mathbb{S}^1$ --here is where we use the compactness of $M$. Arzel\'a-Ascoli implies that --after possibly taking a subsequence-- the maps $F_k$ converge in $C^\infty_{loc}$ to a map $F_\infty: (-\infty, \infty) \times \mathbb{S}^1 \to \R \times M$ with $E(F_\infty) < \infty$ and $E^h(F_\infty) = 0$. 

Observe that 
\[ \lim_{r \to 0} \int_{\gamma_r} \alpha = \int_{\gamma_1} \alpha - \int_{\mathbb{D}^2 \setminus \{0\}} d\alpha. \]
If this limit is zero, then the argument above shows that the $\gamma_r$, $r \to 0$, tend to the constant map in the $C^\infty$ sense, and hence $F$ extends to a map over $\mathbb{D}^2$. Assuming otherwise, it is clear that $F_\infty$ cannot be the constant map and hence Lemma \ref{lem:trivialCylinder} implies the conclusion. 
\end{proof}

\section{Existence of contractible periodic orbits in the closure of an overtwisted leaf} \label{sec:main}

After setting up the study of foliated $J$--holomorphic curves in the previous section and dealing with its compactness issues, we use this machinery to conclude the proof of Theorem \ref{thm:main}. The setting of the theorem is as follows: $(M^{m+3}, \SF^3, \xi^2)$ is a contact foliation with $\Theta^{2+m}$ an extension given by a $1$--form $\alpha$. We write $(\R \times M, \SF_\mathbb{R}, \omega)$ for its symplectisation. $\SL^3$ is a leaf of $\SF$.

\subsection{The Bishop family}

The following results have a local nature and hence do not depend on whether $\SL$ is compact or not. Their proofs can be found in \cite{Ho}.

\subsubsection{The Bishop family at an elliptic point}

If $(\SL, \xi)$ is an overtwisted manifold, let $\Sigma$ be an overtwisted disc for $\xi$. Otherwise, if $\pi_2(\SL) \neq 0$, let $\Sigma$ be some sphere realising a non--zero class in $\pi_2$. Assume, after a small perturbation, that the characteristic foliations are as described in Subsection \ref{sssec:convex} in Exercises \ref{ex:ot} and \ref{ex:tight} and Theorem \ref{thm:EGF}. Denote by $\Gamma_\Sigma$ the set of singular points of the characteristic foliation of $\Sigma$.

Let $p \in \Gamma_\Sigma$, a nicely elliptic point. The maps satisfying:
\begin{equation} \begin{cases} \label{eq:holDiscs}
	F = (u, a): (\mathbb{D}^2, \partial \mathbb{D}^2) \to (\R \times \SL, \{0\} \times \Sigma) \\
	dF \circ i = J \circ dF, \\
	\wind(F, p) = \pm 1, \\
	\ind(F) = 4,
\end{cases} \end{equation}
will be called the \textbf{Bishop family}. $\wind(F, p)$ refers to the winding number of $F(\partial \mathbb{D}^2$) around the elliptic point $p$. 

The condition $\ind(F) = 4$ is implied by the other assumptions. It means that the linearised Cauchy--Riemann operator at $F$ has index $4$, and hence, if there is transversality, the solutions of Equation \ref{eq:holDiscs} close to $F$ form a smooth $4$--dimensional manifold. Since the Mobius transformations of the disc have real dimension $3$, this implies that the image of $F$ is part of a $1$--dimensional family of distinct discs. 

The Bishop family is not empty under some integrability assumptions.

\begin{proposition} \label{prop:Bishop} \emph{(\cite{Bi}, \cite[Section 4.2]{Ho})}
For a suitable choice of $J_\xi$, $J$ is integrable close to $p$. Then there is a smooth family of maps $F_s$, $s \in [0,\varepsilon)$, with $F_0(z) = p$ and $F_s$, $s>0$, disjoint embeddings satisfying Equation \ref{eq:holDiscs}.

Additionally, there is a small neighbourhood $U$ of $p$ such that any other disc satisfying Equation \ref{eq:holDiscs} and interesecting $U$ is a reparametrisation of one of the $F_s$.
\end{proposition}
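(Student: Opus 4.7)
The plan is to reduce the statement to Bishop's classical filling theorem by first choosing local coordinates that make the almost complex structure integrable and identify $\{0\}\times\Sigma$ with a totally real surface in $\mathbb{C}^2$ having a non-degenerate elliptic singularity.

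First I would build the integrable model. Near the nicely elliptic point $p\in\Gamma_\Sigma$, the contact Darboux theorem in dimension $3$ furnishes coordinates $(x,y,z)$ on $\SL$ centred at $p$ in which $\alpha|_\SL$ takes a standard form and $\Sigma$ is a small quadratic perturbation of the plane tangent to $\xi$ at $p$, so that $p$ is an isolated non-degenerate tangency of $T\Sigma$ with $\xi$. In such coordinates the Reeb vector field is $R=\partial_z$ and the contact hyperplane $\xi$ is spanned by a pair of explicit vector fields. I would then choose $J_\xi$ on $\xi$ compatible with $d\alpha$ to match, under the identification $(t,x,y,z)\mapsto(x+iy,\,z+it)$, the standard complex structure of $\mathbb{C}^2$. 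With this choice the extended structure $J$ on $\R\times\SL$ (which sends $\partial_t\mapsto R$) becomes integrable in a neighbourhood of $\{0\}\times p$, and $\{0\}\times\Sigma$ is sent to a totally real surface $\Sigma'\subset\mathbb{C}^2$ with an isolated elliptic tangency at the origin.

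Next I would invoke Bishop's classical construction in $(\mathbb{C}^2,\Sigma')$. One linearises the Riemann-Hilbert problem for holomorphic discs with boundary on $\Sigma'$ at the constant map equal to the origin; the model quadric surface gives an explicit kernel and, together with an implicit function theorem argument on an appropriate Banach space of discs, this yields a smooth one-parameter family $F_s:(\mathbb{D}^2,\partial\mathbb{D}^2)\to(\mathbb{C}^2,\Sigma')$, $s\in[0,\varepsilon)$, with $F_0\equiv 0$ and $F_s$ an embedding for $s>0$ whose boundary winds once around the elliptic point. Pulling the family back through the integrable chart produces the Bishop family in $(\R\times\SL,\{0\}\times\Sigma)$. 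The winding condition $\wind(F_s,p)=\pm 1$ is read off from the explicit model, the discs $F_s$ are pairwise disjoint because the area $\int_{F_s}\omega$ is a strictly monotone function of $s$, and the index $\ind(F_s)=4$ follows from Riemann-Roch for the linearised Cauchy-Riemann operator with a totally real boundary condition of Maslov index $2$ over the disc.

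For local uniqueness, let $U$ be a sufficiently small neighbourhood of $p$ in $\R\times\SL$ contained in the integrable chart. Any solution $F$ of Equation \ref{eq:holDiscs} meeting $U$ must, by elliptic boundary regularity combined with the totally real condition on $\Sigma$ and an area bound forced by the size of $U$, have $C^1$-small image confined to the chart. In the integrable model $F$ is then an honest holomorphic disc with totally real boundary values on $\Sigma'$ near the elliptic singularity, and the implicit function theorem applied to the linearised boundary value problem (whose cokernel vanishes and whose kernel is exhausted by infinitesimal Bishop deformations plus the $3$-dimensional action of $\operatorname{Aut}(\mathbb{D}^2)$) identifies the local moduli with the Bishop family modulo reparametrisation.

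The main obstacle is the first step: producing a leafwise $J_\xi$ compatible with $d\alpha$ for which the extended $J$ is honestly integrable around $p$, rather than merely formally so. This is a delicate normal form computation tying together the Darboux model of $\xi$, the local form of $\Sigma$ at a nicely elliptic tangency, and the compatibility constraints between $J_\xi$ and $d\alpha$; once it is achieved, the argument is just the classical Bishop theorem applied to the resulting totally real surface in $\mathbb{C}^2$, as carried out in \cite{Bi} and \cite[Section 4.2]{Ho}.
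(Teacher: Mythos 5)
The paper does not prove this proposition; it cites \cite{Bi} and \cite[Section~4.2]{Ho}, and your sketch is a faithful reproduction of exactly that classical argument: integrable local model at the elliptic tangency, Bishop's construction of the disc family via an implicit function theorem for the Riemann--Hilbert problem, index computed by Riemann--Roch with Maslov index $2$, and local uniqueness from a size bound plus the linearised analysis.

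One step is mis-justified: the discs $F_s$ are not pairwise disjoint ``because $\int_{F_s}\omega$ is strictly monotone in $s$'' --- two embedded discs of different area can certainly intersect. The correct reason is that in the integrable local model the Bishop discs, together with their conjugates, locally fibre a neighbourhood of the elliptic point in $\mathbb{C}^2$, so they are disjoint by construction; alternatively, one invokes positivity of intersections together with the fact that their boundaries are disjoint, unlinked circles in $\Sigma$. Also a cosmetic sign slip: with $J\partial_t = R = \partial_z$, the natural identification is $w_2 = t + iz$ rather than $z + it$; this does not affect the argument.
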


\subsubsection{Continuation of the Bishop family}

The following statement shows that transversality always holds for the linearised Cauchy--Riemann operator for maps belonging to the Bishop family. 

\begin{proposition} \emph{(\cite[Theorem 17]{Ho})} \label{prop:openess}
Let $F$ satisfy Equation \ref{eq:holDiscs}. Then there is a smooth family of disjoint embeddings $F_s$, $s \in (-\varepsilon, \varepsilon)$, satisfying Equation \ref{eq:holDiscs}, such that $F_0 = F$. Additionally, any two such families are related by a reparametrisation of the parameter space and a smooth family of Mobius transformations. 
\end{proposition}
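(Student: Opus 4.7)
The plan is to set up the standard Fredholm framework for $J$--holomorphic discs with totally real boundary, verify that the linearised Cauchy--Riemann operator $L_F$ is surjective (the crucial automatic transversality step in dimension $3$), and then apply the implicit function theorem to produce the smooth $4$--dimensional local moduli space. Quotienting by the action of $\operatorname{PSL}(2,\R)$ will give the desired $1$--parameter family, and uniqueness will come for free from the local manifold structure.

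First I would fix $p>2$ and $k\geq 1$ and introduce the Banach manifold $\mathcal{B}$ of $W^{k,p}$ maps $(\mathbb{D}^2,\partial\mathbb{D}^2)\to (\R\times\SL,\{0\}\times\Sigma)$, together with the Banach bundle $\mathcal{E}\to\mathcal{B}$ whose fibre is the space of $(0,1)$--forms with values in the pullback of $T(\R\times\SL)$. The section $\bar\partial_J$ is smooth, its zero set is exactly the space of maps satisfying the first two lines of Equation \ref{eq:holDiscs}, and $\wind(F,p)=\pm 1$ is an open condition on this zero set. The linearisation $L_F$ is Fredholm between the $W^{k,p}$ and $W^{k-1,p}$ completions, and the Riemann--Roch formula for discs with totally real boundary gives $\operatorname{ind}(L_F)$ equal to the algebraic index of Equation \ref{eq:holDiscs}, which by assumption equals $4$.

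The main obstacle is the surjectivity of $L_F$. Since $\SL$ has real dimension $3$, the bundle $F^{*}T(\R\times\SL)$ is a rank--$2$ complex vector bundle, so along the boundary I would split the totally real boundary condition $F|_{\partial\mathbb{D}^2}^{*}T(\{0\}\times\Sigma)$ into a tangential part, corresponding to infinitesimal reparametrisations of the disc, and a normal part. The tangential summand is automatically surjective by direct inspection, so the issue reduces to the normal line bundle. Here the hypothesis $\wind(F,p)=\pm 1$ near the nicely elliptic point forces the Maslov index of the induced totally real subbundle on the normal part to be non--negative, and then an automatic transversality argument in the style of Hofer--Lizan--Sikorav (using that on a disc, a Cauchy--Riemann type operator on a complex line bundle with totally real boundary of non--negative Maslov index is surjective) yields surjectivity of $L_F$. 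The three--dimensionality of the leaf is essential at exactly this point; in higher dimensions one would have to perturb $J$ generically instead.

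Once $L_F$ is surjective the implicit function theorem identifies a neighbourhood of $F$ in $\bar\partial_J^{-1}(0)$ with a smooth $4$--manifold $\mathcal{M}$. The group $\operatorname{PSL}(2,\R)$ of Mobius transformations of $\mathbb{D}^2$ acts smoothly on $\mathcal{M}$, and freely near $F$ since any non--constant somewhere injective $J$--holomorphic map has no non--trivial automorphisms. A slice transverse to this $\operatorname{PSL}(2,\R)$--orbit produces a smooth family $F_s$, $s\in(-\varepsilon,\varepsilon)$, with $F_0=F$; shrinking $\varepsilon$, positivity of intersections of $J$--holomorphic curves in the $4$--manifold $\R\times\SL$ keeps the $F_s$ pairwise disjoint embeddings. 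For the uniqueness assertion, any other such family $F'_s$ descends to a smooth curve in the local quotient $\mathcal{M}/\operatorname{PSL}(2,\R)$ passing through $[F]$; since that quotient is a smooth $1$--manifold, $[F'_s]$ and $[F_s]$ differ by a reparametrisation of $s$, and lifting back to $\mathcal{M}$ produces a smooth family of Mobius transformations relating the two.
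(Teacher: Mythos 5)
Your proposal is correct and follows the standard approach, which is essentially what Hofer does in the cited reference \cite[Theorem 17]{Ho} (the paper itself does not reprove this proposition, it merely cites Hofer). The Fredholm setup, the reduction of surjectivity of $L_F$ to an automatic transversality statement for the normal line bundle (using crucially that the leaf is $3$--dimensional so that the normal bundle is a complex line bundle), the implicit function theorem giving a smooth $4$--dimensional local moduli space, and the quotient by the M\"obius group yielding the $1$--parameter family are all the expected ingredients. Your appeal to positivity of intersections in the $4$--manifold $\R\times\SL$ for pairwise disjointness, and to the smooth $1$--manifold structure of the local quotient for the uniqueness clause, are the right arguments. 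One small imprecision: the non--negativity of the Maslov index on the normal summand is most cleanly deduced from $\ind(F)=4$ together with the standard splitting (tangential part contributes $3$, leaving real index $1$ for the normal part, hence normal Maslov index $0$), rather than directly from $\wind(F,p)=\pm 1$; but this is a matter of bookkeeping, not a gap.
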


\subsubsection{Properties of the Bishop family}

Convexity of $\{0\} \times \SL$ inside of $\R \times \SL$ and an application of the maximum principle yield the following lemma. It will be useful to show that there is no disc bubbling.

\begin{lemma} \emph{(\cite[Lemma 19]{Ho})} \label{lem:winding}
Let $F: (\mathbb{D}^2, \partial \mathbb{D}^2) \to (\R \times \SL, \{0\} \times \Sigma)$ be a $J$--holomorphic map. Then $F(\partial \mathbb{D}^2)$ is transverse to the characteristic foliation of $\Sigma$ and $F(\mathbb{D}^2)$ is transverse to $\{0\} \times \SL$.
\end{lemma}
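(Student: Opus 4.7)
My plan is to run a maximum--principle argument on the $\mathbb{R}$--coordinate $a$ of $F = (a, u)$, exploiting that $\{0\}\times \SL$ is a convex hypersurface inside the symplectisation. The starting observation is that $a$ is subharmonic: using $J\partial_t = R$, $J|_\xi = J_\xi$, and the compatibility of $J_\xi$ with $d\alpha$, a direct computation in complex coordinates $(s,t)$ on $\mathbb{D}^2$ rewrites the $J$--holomorphic equation as
\[ F^*\alpha = -a_t\, ds + a_s\, dt, \]
so that $d(F^*\alpha) = \Delta a\, ds\wedge dt$, while on the other hand
\[ F^*d\alpha(\partial_s,\partial_t) = d\alpha(\pi_\xi u_s,\, J_\xi \pi_\xi u_s) = |\pi_\xi u_s|^2 \ge 0. \]
Hence $\Delta a \ge 0$ on $\mathbb{D}^2$.

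For interior transversality, I would invoke the strong maximum principle. Since $a$ vanishes on $\partial \mathbb{D}^2$ and is subharmonic, either $a < 0$ in the interior of $\mathbb{D}^2$ or $a \equiv 0$. The latter option forces $\pi_\xi u_s \equiv 0$, which fed back into the $J$--holomorphic equation yields $dF \equiv 0$, contradicting non-constancy of $F$ (if $F$ is constant the statement is vacuous). So $a < 0$ in the interior, which is exactly transversality of $F(\mathbb{D}^2)$ with $\{0\}\times \SL$ at interior points.

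At the boundary, I would apply the Hopf boundary point lemma to $a$ to obtain the strict inequality $\partial_\nu a(z_0) > 0$ at every $z_0 \in \partial \mathbb{D}^2$, where $\nu$ is the outward unit normal. This gives boundary transversality of $F$ with $\{0\}\times \SL$ at once, since the $\partial_t$--component of $dF(\nu)$ equals $\partial_\nu a$. For transversality of $F(\partial \mathbb{D}^2)$ to the characteristic foliation, let $v$ be a unit tangent to $\partial \mathbb{D}^2$ at $z_0$, so that $\nu = \pm i v$. If $dF(v)$ were to lie in $\xi\cap T\Sigma$, then $J\, dF(v) = J_\xi\, dF(v) \in \xi$, and therefore $dF(\nu) = \pm J\, dF(v)$ would have vanishing $\partial_t$--component, contradicting $\partial_\nu a(z_0) > 0$.

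The only delicate point is the last step: the key is that $J$--compatibility converts tangential information along $\partial \mathbb{D}^2$ into normal information, so tangency of the boundary to the characteristic foliation becomes incompatible with the strict $\partial_t$--growth of $a$ forced by the convexity of $\{0\}\times \SL$. Everything else is a straightforward translation of the classical Hofer argument to the foliated setting, which is legitimate because the whole discussion is local and takes place inside the single leaf $\R\times \SL$.
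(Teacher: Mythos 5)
Your argument is correct and is precisely the convexity-plus-maximum-principle reasoning the paper points to when it cites \cite[Lemma 19]{Ho}; the paper itself gives no independent proof. The computation $F^*\alpha = -a_t\,ds + a_s\,dt$, $\Delta a = |\pi_\xi u_s|^2 \ge 0$, followed by the strong maximum principle and the Hopf lemma, and the final observation that $J$ rotates a would-be tangency into a vanishing normal derivative, is exactly Hofer's proof adapted verbatim to a single leaf $\R\times\SL$.
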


In order to apply Theorem \ref{thm:removalSingularities} we must have energy bounds, which are provided by the following result.

\begin{proposition} \emph{(\cite[Proposition 27]{Ho}} \label{prop:energyBounds}
There are uniform energy bounds  $0 < C_1 < E(F),E^h(F) < C_2 < \infty$ for every $F$ satisfying Equation \ref{eq:holDiscs} and having
\[ \dist(\im(F), \Gamma_\Sigma) > \varepsilon > 0. \]
\end{proposition}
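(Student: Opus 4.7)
The plan is to apply Stokes' theorem twice: once on $\mathbb{D}^2$ to reduce both $E(F)$ and $E^h(F)$ to a common integral of $\alpha$ along $\gamma := u(\partial \mathbb{D}^2) \subset \Sigma$, and once on $\Sigma$ itself to control that integral by the $d\alpha|_\Sigma$-area of the region enclosed by $\gamma$.

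Since $F(\partial \mathbb{D}^2) \subset \{0\} \times \Sigma$, the $\R$-component $a$ of $F$ vanishes on $\partial \mathbb{D}^2$. For any $\phi \in \Gamma$, applying Stokes on $\mathbb{D}^2$ to the exact form $d(\phi(a)\alpha)$ gives
\[ \int_{\mathbb{D}^2} F^* d(\phi(a)\alpha) = \phi(0) \int_{\partial \mathbb{D}^2} u^*\alpha. \]
Since $\sup_{\phi \in \Gamma} \phi(0) = 1$ and $\int_{\partial \mathbb{D}^2} u^*\alpha = E^h(F)$ by another Stokes, we obtain $E(F) = E^h(F)$, so it suffices to bound this common value from above and below by positive constants. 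By Lemma \ref{lem:winding} the curve $\gamma$ is transverse to the characteristic foliation on $\Sigma$, and the winding-number condition together with the openness Proposition \ref{prop:openess} and the local model Proposition \ref{prop:Bishop} force $\gamma$ to be an embedded simple closed curve bounding a topological disc $R \subset \Sigma$ with $p \in R$.

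The upper bound is immediate from Stokes on $\Sigma$:
\[ E^h(F) = \pm \int_R d\alpha|_\Sigma \leq \int_\Sigma |d\alpha|_\Sigma| =: C_2, \]
a constant depending only on $\Sigma$ and $\alpha$. For the lower bound, the hypothesis $\dist(\im(F), \Gamma_\Sigma) > \varepsilon$ forces $\gamma$ to avoid an intrinsic $\varepsilon$-neighborhood of every singular point of $\Sigma$. Hence the geodesic $\varepsilon$-disc $B_\varepsilon(p) \subset \Sigma$ is disjoint from $\gamma$ and, being connected and containing $p \in R$, lies entirely in $R$. Since $p$ is a nicely elliptic singularity, $T_p \Sigma = \xi_p$, so $d\alpha|_\Sigma$ is a non-vanishing volume form on a fixed neighborhood of $p$. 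Therefore
\[ E^h(F) \geq \int_{B_\varepsilon(p)} d\alpha|_\Sigma \geq C_1(\varepsilon) > 0, \]
with $C_1(\varepsilon)$ depending only on $\Sigma$, $\alpha$, and $\varepsilon$.

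The principal delicacy in this argument is the identification of $R$ as an honest embedded disc on $\Sigma$. Without embeddedness of $\gamma$ the $2$-chain it bounds could carry large multiplicity and the upper bound would fail to be uniform; and without the uniform containment $B_\varepsilon(p) \subset R$ the lower bound would collapse. Both rest on the structure of the Bishop family near $p$: Proposition \ref{prop:Bishop} supplies embedded discs for small parameters, and Proposition \ref{prop:openess} propagates embeddedness through the one-parameter continuation, so $\gamma$ remains a simple transverse closed curve as long as the family stays at definite distance from $\Gamma_\Sigma$.
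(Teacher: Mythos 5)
Your proof follows exactly the same route as the paper's: Stokes' theorem on $\mathbb{D}^2$ reduces $E(F)$ and $E^h(F)$ to $\int_{\partial\mathbb{D}^2}u^*\alpha$, and Stokes on $\Sigma$ converts this to the $d\alpha|_\Sigma$-area of the disc $R$ that the transverse boundary curve encloses, which is then estimated. You are more explicit than the paper about $E(F)=E^h(F)$, about the embeddedness of $\gamma$, and about the containment $B_\varepsilon(p)\subset R$; all of that is correct and matches the paper's intent.

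One point of caution, which is left just as implicit in the paper as in your write-up: the step
\[ \int_R d\alpha|_\Sigma \;\geq\; \int_{B_\varepsilon(p)} d\alpha|_\Sigma \]
is only valid if $d\alpha|_\Sigma\geq 0$ on $R\setminus B_\varepsilon(p)$, and this is not automatic. For the standard overtwisted disc with $\alpha_{ot}=\cos(r)dz+r\sin(r)d\theta$ one has $d\alpha_{ot}|_{\{z=0\}}=(\sin r + r\cos r)\,dr\wedge d\theta$, which changes sign at $r^*\in(\pi/2,\pi)$; and in the $\pi_2\neq 0$ case $\int_\Sigma d\alpha|_\Sigma = 0$, so $d\alpha|_\Sigma$ necessarily changes sign on the sphere. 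The containment $B_\varepsilon(p)\subset R$ therefore does not by itself force a lower bound on $\int_R d\alpha|_\Sigma$; some additional control (e.g.\ on how far the Bishop boundary curves can travel while the gradient stays bounded, so that $R$ stays in the region where $d\alpha|_\Sigma>0$, or a more careful argument as in Hofer's original Proposition 27) is needed. The paper's phrase ``the area $\ldots$ is bounded below under the assumption that they have radius at least $\varepsilon$'' glosses over the same point. So your proof is a faithful expansion of the paper's, and the issue above is worth keeping in mind if you want the argument to be fully self-contained rather than a pointer to \cite{Ho}.
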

\begin{proof}
By Stokes' theorem:
\[ E(F) = \sup_{\phi \in \Gamma} \int_{\mathbb{D}^2} F^* d(\phi\alpha) =  \sup_{\phi \in \Gamma} \int_{\partial \mathbb{D}^2} F^* \phi\alpha = \]
\[ \sup_{\phi \in \Gamma}  \phi(0) \int_{\partial \mathbb{D}^2} F^*\alpha = \int_{F(\partial \mathbb{D}^2)} \alpha.  \]
$F(\partial \mathbb{D}^2)$ winds around the critical point exactly once and hence bounds a disc within $\Sigma$. The area of such a disc is always bounded above by a universal constant and is bounded below under the assumption that they have radius at least $\varepsilon$. The claim follows.

A similar estimate holds for $E^h$. 
\end{proof}

\subsection{Proof of Theorem \ref{thm:main}}

Now we tie all the results we have discussed so far.

\begin{lemma} \label{lem:jump1ot}
Let $\SL$ be a leaf of $\SF$ and assume that $(\SL, \xi)$ is an overtwisted contact manifold. Then there is a finite energy plane contained in $\R \times \SL' \in \SF_\mathbb{R}$, with $\SL'$ lying in the closure of $\SL$. 
\end{lemma}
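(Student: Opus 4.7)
The plan is to run Hofer's Bishop--disc argument in this foliated setting. Start a one--parameter family of foliated $J$--holomorphic discs at a nicely elliptic point inside an overtwisted disc of $(\SL, \xi)$, continue it maximally using the openness theorem, and show that the failure to extend past the maximal parameter can only come from plane bubbling, which produces the desired finite energy plane. The key observation that allows this to work despite $\SL$ being possibly open is that the ambient $M$ is compact, so Arzel\`a--Ascoli and the bubbling analysis of the previous section still apply. Any plane bubble is trapped in $\R \times M$, but may land in an accumulating leaf $\SL'$ in the closure of $\SL$, which is exactly what the statement of the lemma allows.

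Concretely, I would fix an overtwisted disc $\Sigma \subset \SL$ with Legendrian boundary $\partial \Sigma$ and, after the $C^\infty$--small perturbation of Example \ref{ex:ot}, assume its characteristic foliation has a single nicely elliptic singular point $p$ in the interior, with every other leaf spiralling from $\partial \Sigma$ into $p$. Proposition \ref{prop:Bishop} (with $J_\xi$ chosen so that $J$ is integrable near $p$) supplies a local Bishop family of embedded discs $F_s : (\mathbb{D}^2, \partial \mathbb{D}^2) \to (\R \times \SL, \{0\} \times \Sigma)$, $s \in [0, \varepsilon)$, satisfying Equation \ref{eq:holDiscs}, and Proposition \ref{prop:openess} extends it to a maximal smooth family parametrised by $[0, T)$. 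Lemma \ref{lem:winding} then forces each boundary $F_s(\partial \mathbb{D}^2)$ to be transverse to the characteristic foliation of $\Sigma$ and to wind once around $p$; since no transverse loop can enter a fixed collar of $\partial \Sigma$ (the leaves spiral into it) and Proposition \ref{prop:Bishop} gives local uniqueness of the family near $p$, the boundaries remain in a compact annular region of $\Sigma \setminus \{p\}$ once $s \geq s_0 > 0$. Proposition \ref{prop:energyBounds} then furnishes the uniform bound $E(F_s) < C < \infty$.

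The crux is the limit as $s \to T$. Proposition \ref{prop:ArzelaAscoli} says that if $\|dF_s\|$ stayed uniformly bounded a subsequence would converge in $C^\infty$ to a $J$--holomorphic disc that Proposition \ref{prop:openess} could continue past $T$, contradicting maximality; so $\|dF_{s_k}\| \to \infty$ at some points $q_k$ and we fall into the scenario of Proposition \ref{prop:planeBubbling} or \ref{prop:discBubbling}. The main obstacle is excluding disc bubbling. I would argue via Hofer's standard winding trick: a disc bubble would be a non--constant foliated $J$--holomorphic disc whose boundary is a small loop on $\Sigma$ transverse to the characteristic foliation (Lemma \ref{lem:winding}); by the a priori separation from $\Gamma_\Sigma = \{p\}$ established above such a bubble can be arranged to miss a neighbourhood of $p$, so its boundary is a loop bounding a disc in $\Sigma \setminus \Gamma_\Sigma$ transverse to a nowhere--zero line field, which is impossible by Poincar\'e--Hopf. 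Only plane bubbling is left, and Proposition \ref{prop:planeBubbling} then produces a non--constant foliated $J$--holomorphic plane $F_\infty : \mathbb{C} \to \R \times \SL'$ with $0 < E(F_\infty) < C$, where $\SL'$ lies in the closure of $\SL$ because the vertical shifts $\tau_k$ and rescalings $\phi_k$ may carry the limit into an accumulating leaf. That is the finite energy plane demanded by the lemma.
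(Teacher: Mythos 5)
Your proposal is correct and runs Hofer's Bishop--disc argument essentially as the paper does: build the family at a nicely elliptic point, continue by openness, show the gradient must explode, exclude disc bubbling by a winding argument, and conclude with a plane bubble landing possibly in an accumulating leaf $\SL'$ (the compactness of the ambient $M$ being what makes Arzel\`a--Ascoli and the bubbling analysis go through). The only superficial differences are in how the forced gradient explosion is phrased (you invoke maximality of the parameter interval plus the collar--avoidance observation, the paper instead observes that the union $D$ of boundary circles would be open and compact in $\Sigma$, forcing a tangency with $\partial\Sigma$ contradicting Lemma \ref{lem:winding}) and how disc bubbling is excluded (you apply Poincar\'e--Hopf to the characteristic line field on the small disc cut out by the bubble's boundary, the paper notes that the winding numbers of the connected bubbles would have to add to $1$), but these are the same observations seen from slightly different angles.
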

\begin{proof}
Proposition \ref{prop:Bishop} shows that the set of solutions of Equation \ref{eq:holDiscs} is non--empty and Proposition \ref{prop:openess} shows that, up to Moebius transformations, it is an open $1$--dimensional manifold. Denote by $\mathcal{M}$ the component that contains the solutions arising from the elliptic point. 

The boundaries of the maps in $\mathcal{M}$ are pairwise disjoint by Proposition \ref{prop:openess} and they remain transverse to the characteristic foliation by Lemma \ref{lem:winding}. Hence, they define an open submanifold $D$ of the overtwisted disc.

Take a sequence in $\mathcal{M}$ whose distance to the elliptic point is uniformly bounded from below. Then Proposition \ref{prop:ArzelaAscoli} says that either the gradient is unbounded in the family or their limit (by taking a subsequence) is a new solution of Equation \ref{eq:holDiscs}. 

We conclude that if the gradient does not explode, $D$ is both compact and open. Then $D$ should have a tangency with the boundary of the overtwisted disk, which is a contradiction with Lemma \ref{lem:winding}.

Since the gradient explodes, we know by Propositions \ref{prop:planeBubbling} and \ref{prop:discBubbling} that either a plane or a disc bubble appears. In the case of a disc bubble, the standard analysis as in \cite{Ho} shows that bubbles connect, and hence we must have two $J$--holomorphic discs touching at a point and whose winding numbers add up to $1$. This is a contradiction with Lemma \ref{lem:winding}. 

We conclude that necessarily a plane bubble must appear.
\end{proof}

\begin{lemma} \label{lem:jump1pi2}
Let $\SL$ be a leaf of $\SF$ and assume that $\pi_2(\SL) \neq 0$. Then there is a finite energy plane contained in $\R \times \SL \in \SF_\mathbb{R}$, with $\SL'$ lying in the closure of $\SL$. 
\end{lemma}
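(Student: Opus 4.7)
My plan is to adapt the Bishop family argument of Lemma~\ref{lem:jump1ot}, replacing the overtwisted disc by an essential sphere $\Sigma \subset \SL$ representing a nonzero class in $\pi_2(\SL)$, and using the non-triviality of $[\Sigma]$ in place of the overtwisted geometry to rule out the ``no bubbling'' alternative.

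First, I would observe that if $(\SL, \xi|_\SL)$ happens to be overtwisted then Lemma~\ref{lem:jump1ot} already yields the conclusion, so I may assume $(\SL,\xi|_\SL)$ is tight. I then pick an embedded sphere $\Sigma \subset \SL$ with $[\Sigma] \neq 0 \in \pi_2(\SL)$, and invoke Theorem~\ref{thm:EGF} to $C^0$--perturb $\Sigma$ (preserving its homotopy class) so that its characteristic foliation has exactly two nicely elliptic singularities $p_\pm$, as in Example~\ref{ex:tight}. Proposition~\ref{prop:Bishop}, applied at $p = p_+$, then provides a local Bishop family of solutions $F_s$ of Equation~\ref{eq:holDiscs} with $F_0$ the constant map at $p_+$; let $\mathcal{M}$ denote the connected component of the moduli of Equation~\ref{eq:holDiscs}, modulo M\"obius transformations, that contains this family. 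By Proposition~\ref{prop:openess}, $\mathcal{M}$ is a smooth $1$--manifold with one boundary endpoint at $F_0$; by Lemma~\ref{lem:winding} and Proposition~\ref{prop:openess} the boundary circles $F_s(\partial \mathbb{D}^2)$ are pairwise disjoint and transverse to the characteristic foliation of $\Sigma$, so they sweep out an open region $D \subset \Sigma \setminus \{p_+,p_-\}$ foliated by disjoint embedded circles. Uniform energy bounds $E(F_s), E^h(F_s) < C$ hold along $\mathcal{M}$ by Proposition~\ref{prop:energyBounds} as long as the discs remain a fixed distance from $\{p_\pm\}$.

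The core dichotomy is then whether $\|dF_s\|$ stays uniformly bounded along $\mathcal{M}$. If it does, I would combine Proposition~\ref{prop:ArzelaAscoli} (every gradient-bounded sequence subconverges in $C^\infty$ to another solution of Equation~\ref{eq:holDiscs}), Proposition~\ref{prop:openess} (any such limit sits in an open neighbourhood of the moduli), and Proposition~\ref{prop:Bishop} (local uniqueness of Bishop families at each nicely elliptic point) to conclude that $\mathcal{M}$ is a closed interval whose second endpoint is the degenerate disc at $p_-$ and that $D = \Sigma \setminus \{p_\pm\}$. Assembling the discs then produces a topological $3$--ball $\bigcup_s F_s(\mathbb{D}^2) \subset \R \times \SL$ whose boundary sphere is exactly $\Sigma$, giving a nullhomotopy of $\Sigma$ in $\R \times \SL \simeq \SL$ and contradicting $[\Sigma] \neq 0 \in \pi_2(\SL)$. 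Therefore the gradient must blow up somewhere along $\mathcal{M}$, and Propositions~\ref{prop:planeBubbling} and \ref{prop:discBubbling} produce either a plane bubble or a disc bubble. Disc bubbles I would exclude exactly as in the overtwisted case: the ``bubbles connect'' analysis of~\cite{Ho} would furnish two $J$--holomorphic discs meeting at a point whose boundary windings around $p_+$ sum to $\pm 1$, whereas Lemma~\ref{lem:winding} forces each individual winding to equal $\pm 1$ with the \emph{same} sign, so the sum has absolute value at least $2$. The only surviving scenario is a plane bubble $F_\infty : \mathbb{C} \to \R \times \SL'$, with $\SL'$ a leaf in the closure of $\SL$ and $0 < E(F_\infty) \leq C$; non-constancy is automatic since $\|d_0 F_\infty\| = 1$ by construction. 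This is the desired finite energy plane.

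The main obstacle I expect in this outline is the ``no blow-up'' alternative: one must rigorously show that the boundaries $F_s(\partial \mathbb{D}^2)$ can accumulate only at the two nicely elliptic singularities and nowhere else on $\Sigma$, so that the assembled image really is a $3$--ball with boundary $\Sigma$ and not merely a partial sweep. This classification of the ends of the $1$--manifold $\mathcal{M}$ will require carefully combining the openness of $\mathcal{M}$ (Proposition~\ref{prop:openess}), the compactness of gradient-bounded sequences (Proposition~\ref{prop:ArzelaAscoli}), and the local Bishop uniqueness at each nicely elliptic point (Proposition~\ref{prop:Bishop}).
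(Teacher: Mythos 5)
Your proposal is essentially the paper's argument: build the Bishop family at the elliptic point(s) of the essential sphere $\Sigma$, show that if the gradient stayed bounded the family would close up and sweep out a nullhomotopy of $\Sigma$ in $\R\times\SL$, contradicting $[\Sigma]\neq 0\in\pi_2(\SL)$, hence the gradient blows up; exclude disc bubbles by the winding argument and conclude a finite energy plane appears. The one presentational difference is that the paper works with both local Bishop families $\mathcal{M}^-$ and $\mathcal{M}^+$ emanating from $p_-$ and $p_+$ and uses the local uniqueness statement in Proposition~\ref{prop:Bishop} at $p_+$ to pin down the second endpoint of the interval (the families must coincide once the boundary circles enter a neighbourhood of $p_+$), whereas you run a single family and flag as an obstacle precisely the need to classify its second end; the paper's use of the second local family is exactly the clean way to discharge that obstacle. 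Your first step (reduce to $\SL$ tight via Lemma~\ref{lem:jump1ot}) is harmless but unnecessary: Theorem~\ref{thm:EGF} is invoked only to normalise the characteristic foliation of $\Sigma$, and the subsequent argument never uses tightness.
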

\begin{proof}
Let us denote by $p_-$ and $p_+$ the two elliptic points of the convex $2$--sphere $\Sigma$ realising a non trivial element of $\pi_2(\SL)$. Proposition \ref{prop:Bishop} gives two different Bishop families starting at each point, which we denote by $\mathcal{M}^-$ and $\mathcal{M}^+$, respectively.

Assume that the gradient is uniformly bounded in the Bishop family $\mathcal{M}^-$. Then $\mathcal{M}^-$ is open and compact, and an application of Proposition \ref{prop:openess} shows that it can be continued until the boundaries of the discs in the family reach $p_+$. Since we know by Proposition \ref{prop:Bishop} that in a neighbourhood of $p_+$ the only curves are those in $\mathcal{M}^+$, both families must be the same. The evaluation map 
\[ \ev: \mathcal{M}^- \times \mathbb{D}^2 \approx [0,1] \times \mathbb{D}^2 \to \SL \]
\[ \ev(F=(a,u), z) = u(z) \]
satisfies $\ev(\partial (\mathbb{M}^- \times \mathbb{D}^2)) = \Sigma$, which contradicts the fact that $\Sigma$ was non--trivial in $\pi_2(\SL)$.

Therefore, the gradient must explode, and since a disc bubble cannot appear, the claim follows.
\end{proof}

We are now ready to prove Theorem \ref{thm:main}.

\begin{proof}[Proof of Theorem \ref{thm:main}]
Let $(\SL, \xi)$ be overtwisted. Proposition \ref{lem:jump1ot} yields a finite energy plane $F: \mathbb{C} \to \R \times \SL$, with $\SL'$ a leaf of $\SF$ contained in the closure of $\SL$. By Lemma \ref{lem:noSphereBubbling} this plane cannot be completed to a sphere. Now an application of Theorem \ref{thm:removalSingularities} shows that there is a closed Reeb orbit in some leaf $\SL''$ lying in the closure of $\SL'$. Since $\SL''$ is in the closure of $\SL$ the claim follows.

Same argument goes through by applying Proposition \ref{lem:jump1pi2} if $\pi_2(\SL) \neq 0$. 
\end{proof}

\begin{remark}
As we have seen, Lemmas \ref{lem:jump1ot} and \ref{lem:jump1pi2} yield a finite energy plane in a leaf that might not be the one containing the overtwisted disc or the convex $2$--sphere. Then, an application of Theorem \ref{thm:removalSingularities} shows that the plane is asymptotic to a trivial cylinder that might live yet in a nother leaf. 

Our example in Subsection \ref{ssec:sharp} shows that at least one of these two phenomena must take place. Is it possible for a ``\textsl{double jump}'' to actually happen?
\end{remark}

\begin{remark}
Let $(M^{2n+1+m}, \SF^{2n+1}, \xi)$ be a contact foliation. Let $\SL$ be a leaf of $\SF$ and let $(\SF, \xi)$ be overtwisted in the sense of \cite{BEM}. More generally, assume that $(\SF, \xi)$ contains a plastikstufe \cite{Nie}.

It is immediate that the Bishop family arising from the plastikstufe can be employed to show that there must be a Reeb orbit, so Theorem \ref{thm:main} also holds true for overtwisted manifolds in all dimensions.
\end{remark}

\section{The non--degenerate case}

In this section we show that under non--degeneracy assumptions none of the \textsl{jumps} between leaves can happen.

\begin{definition}
Let $(M, \SF, \xi)$ be a contact foliation and let $\alpha$ be the defining $1$--form for some extension $\Theta$ of $\xi$. 

A closed orbit of the Reeb vector field associated to $\alpha$ is called \textbf{non--degenerate} if it is isolated among Reeb orbits having the same period and lying in the same leaf of $\SF$.

The form $\alpha$ is called \textbf{non--degenerate} if all the closed orbits of its Reeb vector field are non--degenerate. 
\end{definition}

The statement we want to show is the following. It is a stronger version of the Removal of Singularities (Theorem \ref{thm:removalSingularities}) in the non--degenerate case. 

\begin{theorem} \label{thm:nonDegenerateRemovalSingularities}
Let $(M, \SF, \xi)$ be a contact foliation and let $\alpha$ be the defining $1$--form for some extension $\Theta$ of $\xi$. Assume $\alpha$ is non--degenerate.

Let $F=(a,u): \mathbb{D}^2 \setminus \{0\} \to \R \times \SL \subset \R \times M$ be a $J$--holomorphic curve with $0 < E(F) < \infty$, $\SL$ a leaf of $\SF$. 

Then, either $F$ extends to a $J$--holomorphic map over $\mathbb{D}^2$ or the curves $\gamma_r(s) = u(e^{r+is})$ converge in $C^\infty$ to a closed Reeb orbit $\gamma$ lying in $\SL$. 
\end{theorem}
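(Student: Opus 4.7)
The plan is to refine the conclusion of Theorem \ref{thm:removalSingularities} at two points using non-degeneracy: promoting subsequential $C^\infty$ convergence along some $r_k \to 0$ to full $C^\infty$ convergence, and forcing the asymptotic Reeb orbit to lie in $\SL$ rather than merely in $\overline{\SL}$. First I would run the proof of Theorem \ref{thm:removalSingularities} verbatim to obtain the reparametrized sequence $F_k(s,t) = (a(s+R_k,t) - a(R_k,0),\, u(s+R_k,t))$ with its $C^\infty_{\mathrm{loc}}$ subsequential limit $F_\infty : \R \times \mathbb{S}^1 \to \R \times M$, a trivial cylinder over a closed Reeb orbit $\gamma_\infty$ in some leaf $\SL'' \subset \overline{\SL}$. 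The period $T := \int_{\gamma_\infty} \alpha = \lim_{r \to 0} \int_{\gamma_r} \alpha$ is uniquely determined by $F$.

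The key step is to prove $\gamma_\infty \subset \SL$. I would apply the implicit function theorem to the time-$T$ Poincar\'e return map $P : \Sigma \to \Sigma$ of the ambient Reeb flow on a transverse slice $\Sigma$ at a point $q \in \gamma_\infty \cap \Sigma$. Because $R \in \Gamma(T\SF)$, $P$ preserves $\SF \cap \Sigma$ leaf-by-leaf, so $d_q P$ acts as $\Id$ on the directions transverse to $\SF$ and coincides with the leafwise Poincar\'e return map along $T_q(\SL'' \cap \Sigma)$; the latter has no eigenvalue $1$ by non-degeneracy of $\gamma_\infty$. Thus $P$ satisfies a foliated Morse--Bott condition at $q$, and its fixed-point set near $q$ is a smooth graph over a transverse $m$-disc, yielding a smooth family of closed Reeb orbits of period near $T$, one in each leaf of $\SF$ meeting a tubular neighborhood $U$ of $\gamma_\infty$; in particular it furnishes a unique orbit $\gamma_\SL \subset \SL \cap U$ since $\SL$ accumulates on $\gamma_\infty$. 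Subsequential convergence together with the fact that the horizontal energy of $F$ on annuli $\{r' \leq |z| \leq r\}$ tends to zero as $r \to 0$ forces the image of $F$ on a small punctured disc into $U$; consequently this image lies in the relatively compact subset $U \cap (\R \times \SL)$ of $\R \times \SL$. The classical Hofer--Wysocki--Zehnder asymptotic theorem, applied \emph{inside the leaf} to the finite-energy half-cylinder $F$ in the symplectization of $(\SL, \alpha|_\SL)$ with non-degenerate asymptotic orbit $\gamma_\SL$, then gives full $C^\infty$ exponential convergence $\gamma_r \to \gamma_\SL$ in $\SL$. Matching with the $M$-subsequential limit $\gamma_\infty$ yields $\gamma_\SL = \gamma_\infty$ as curves in $M$, and disjointness of the leaves of $\SF$ forces $\SL = \SL''$, which is the statement of the theorem.

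The main obstacle is the foliated Morse--Bott step. Non-degeneracy is a strictly leafwise hypothesis, so the ambient return map $P$ is automatically degenerate along directions transverse to $\SF$, and one must verify both that $\ker(d_q P - \Id)$ is exactly these transverse directions (no spurious resonance inside $\SL''$) and that the resulting IFT family extends uniformly in a $M$-neighborhood of $\gamma_\infty$ meeting every sufficiently close leaf. Once this foliated normal form is secured, trapping $F$ inside $U$ and invoking the leafwise HWZ asymptotic theorem is routine.
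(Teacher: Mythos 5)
Your Morse--Bott observation is correct and clean: in a foliation chart the ambient time-$T$ return map $P$ takes the upper-triangular form $P(x,y)=(p(x,y),y)$, so leafwise non-degeneracy ($\partial_x p - \Id$ invertible) forces $\ker(d_qP-\Id)$ to be an $m$-dimensional graph over the transverse directions, and the implicit function theorem produces a smooth $m$-parameter family of closed orbits near $\gamma_\infty$, one per nearby plaque. This step, which you flag as the main obstacle, is actually straightforward linear algebra. The genuine gap is exactly the step you dismiss as ``routine'': trapping the image of $F$ near the puncture inside the chart $U$ (equivalently, inside a \emph{single} plaque of $\SL \cap U$). Subsequential $C^\infty$ convergence of $\gamma_{r_k}$ to $\gamma_\infty$ together with $\lim_{r\to 0}\int_{\{r'\le|z|\le r\}}F^*d\alpha = 0$ does \emph{not} force the whole annular image into $U$: small horizontal energy bounds the $d\alpha$-area but not the $C^0$-excursion of the intermediate loops $\gamma_r$, $r\in(r_{k+1},r_k)$, which can exit $U$ and re-enter in a different plaque of $\SL$. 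Indeed, the Morse--Bott family you constructed is precisely what makes trapping delicate: the $\omega$-limit set of $F$ in $M$ could a priori be a nontrivial connected subset of that family rather than the single orbit $\gamma_\infty$, and a phrase like ``a unique orbit $\gamma_\SL\subset\SL\cap U$'' is also off, since $\SL$ generally meets $U$ in infinitely many plaques, each containing one member of the family.

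What the paper does instead is a contradiction argument that \emph{proves} trapping rather than invoking it. Assuming the asymptotic orbit lies in a leaf $\SL'\ne\SL$, one shrinks $U$ so that, by non-degeneracy, the plaque of $\gamma$ contains no other closed orbit of period $T$, and then exploits the fact that $\gamma_{r_i}\subset\SL$ must lie in plaques of heights $h_i\to 0$, $h_i\ne 0$. Between consecutive $r_i$ the loops must escape their plaque, so one picks escape radii $R_i$ at which $\gamma_{R_i}$ hits $\partial U$, recenters the cylinder there, and extracts (after a bubbling analysis and Arzel\'a--Ascoli in the compact ambient $M$) a non-constant finite-energy cylinder $F_\infty$ with $E^h=0$ lying over a closed orbit of action $T$ in the height-$0$ plaque that meets $\partial U$ and hence is distinct from $\gamma$ --- contradicting the isolation property. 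Only after this, with the asymptotic orbit known to lie in a fixed compact plaque of $\SL$, does the standard leafwise HWZ asymptotic analysis give full $C^\infty$ convergence. So your skeleton is salvageable, but the crux of the theorem is precisely the trapping/escape-radius argument, not the foliated Morse--Bott normal form, and a complete proof must supply it.
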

\begin{proof}
We proceed by contradiction. Assume that $\gamma$, the limit of some $\gamma_{r_i}$, $r_i \to \infty$, is contained in some leaf $\SL' \neq \SL$.

Denote $T = \int_\gamma \alpha$, the period of $\gamma$. By our assumption on $\alpha$, we can find a closed foliation chart $U \subset M$ diffeomorphic to $\mathbb{D}^2 \times \mathbb{S}^1 \times [-1,1]$ around $\gamma$ such that the plaque in $U$ containing $\gamma$ intersects no other orbits of period $T$. Write $h: U \to [-1,1]$ for the height function of the chart: we can assume that $h^{-1}(0)$ is the plaque containing $\gamma$. 

Since the curves $\gamma_{r_i}$ converge in $C^\infty$ to $\gamma$, their images are contained in $U$ for large enough $i$.  Assume, by possibly restricting to a subsequence, that each $\im(\gamma_{r_i})$ lies in a different plaque of $\SF \cap U$. Then, for each $i$, there is a smallest radius $r_i < R_i < r_{i+1}$ such that $\im(\gamma_r)$ is disjoint from the plaque containing $\im(\gamma_{r_i})$, for all $r > R_i$. In particular, $\im(\gamma_{R_i})$ intersects $\partial U$. 

Consider the maps 
\[ F_i: [r_i - R_i, r_{i+1} - R_i] \times \mathbb{S}^1 \to \R \times M \]
\[ F_i(t,s) = (a(e^{t+R_i+is}) - a(e^{R_i}), u(e^{t+R_i+is})) \]
By construction, $F_i(0,0) \in \{0\} \times M$, $F_i(0,s) \cap \{0\} \times (\partial U) \neq \emptyset$, and $\lim_{i \to \infty} h \circ F_i = 0$

By carrying out the bubbling analysis, we can assume that the $F_i$ have bounded gradient. In particular, $r_{i+1} - r_i$ must be uniformly bounded from below by a non--zero constant. Arcel\'a--Ascoli states that the $F_i$ converge in $C^\infty_{loc}$ --maybe after taking a subsequence-- to a map $F_\infty$ with $E^h(F_\infty) = 0$ and therefore lying on top of some Reeb orbit.

By the properties of the $F_i$, $F_\infty$ must have image contained in $\R \times \SL'$ and intersecting  $\R \times (h^{-1}(0) \cap \partial U)$. In particular, $\im(F_\infty)$ is not contained in $\R \times \gamma$. If $\lim_{i \to \infty} R_i-r_i < \infty$, the curves $s \to F_i(r_i-R_i,s)$ would converge to $\gamma$, which is a contradiction. Similarly we deduce that $\lim_{i \to \infty} r_{i+1} - R_i = \infty$. 

Since it has finite energy, $F_\infty: (-\infty, \infty) \times \mathbb{S}^1 \to \R \times \SL'$ must yield a periodic orbit of the Reeb. It must be a closed orbit different from $\gamma$, having period $T$ and intersecting the plaque containing $\gamma$, which is a contradiction. 

We have proved that the limit must lie in $\SL$. It is standard then that the limit does not depend on the sequence chosen $r_i$.
\end{proof}

\begin{remark}
Theorem \ref{thm:nonDegenerateRemovalSingularities} immediately implies that a finite energy plane is asymptotic to a trivial cylinder lying in the same leaf. 

Similarly, it shows that the Bishop family always yields a plane bubble in the original leaf $\SL$: outside of a finite set of points, the Bishop family converges to foliated $J$--holomorphic curve with boundary in the overtwisted disc and possibly many punctures that are asymptotic at $-\infty$ to a number of Reeb orbits necessarily lying in $\SL$.
\end{remark}

\end{document}